\def\smallsection#1{\smallskip\noindent\textbf{#1}.}
\let\epsilon=\varepsilon 
\newtheorem{theo}{Theorem}
\newtheorem{prop}{Proposition}[section]
\newtheorem{defi}[prop]{Definition}
\newtheorem{lemm}[prop]{Lemma}
\numberwithin{equation}{section}
\DeclareMathOperator{\comp}{comp}
\DeclareMathOperator{\End}{End}
\DeclareMathOperator{\Id}{Id}
\let\Im=\Imag
\DeclareMathOperator{\loc}{loc}
\let\Re=\Real
\DeclareMathOperator{\sgn}{sgn}
\DeclareMathOperator{\supp}{supp}
\DeclareMathOperator{\WF}{WF}
\def\WFh{\WF_h}
\title{Spectral gaps for normally hyperbolic trapping}
\author{Semyon Dyatlov}
\email{dyatlov@math.mit.edu}
\address{Department of Mathematics, Massachusetts Institute of Technology,
77 Massachusetts Ave, Cambridge, MA 02139}
\begin{document}

\begin{abstract}
We establish a resonance free strip for codimension 2 symplectic normally hyperbolic trapped sets
with smooth incoming/outgoing tails. An important application is wave decay on Kerr and Kerr--de Sitter black holes.
We recover the optimal size of the strip and give an $o(h^{-2})$ resolvent bound there.
We next show existence of deeper resonance free strips under the $r$-normal hyperbolicity
assumption and a pinching condition. We also give a lower bound on the one-sided cutoff resolvent on the real line.
\end{abstract}

\maketitle

\addtocounter{section}{1}
\addcontentsline{toc}{section}{1. Introduction}

This paper is a collection of results regarding resonance free strips (also
known as \emph{spectral gaps}) and resolvent
estimates in the presence of \emph{normally hyperbolic trapping}. 
Such trapping has received a lot of attention recently because of its connection
with exponential decay of waves on black hole backgrounds,
exponential decay of correlations for contact Anosov flows, and applications to molecular chemistry.
See below for an introduction to the role of resolvent bounds in decay estimates.

In~\cite{wu-zw}, Wunsch and Zworski showed existence of a small spectral gap for symplectic normally
hyperbolic trapped sets under the assumption that the incoming/outgoing tails $\Gamma_\pm$
are smooth and have codimension 1 in the phase space; this includes subextremal Kerr and
Kerr--de Sitter black holes. More recently, Nonnenmacher and Zworski~\cite{no-zw}
extended this result by (a) assuming much weaker regularity of
$\Gamma_\pm$, namely that their tangent spaces at the trapped
set have merely continuous dependence on the base point (b) making no assumption
on the codimension of $\Gamma_\pm$ (c) establishing the optimal size of the gap. The assumptions
of~\cite{no-zw} apply to more general situations, including contact Anosov flows. 

The first result of this paper (Theorem~\ref{t:gap1})
is a spectral gap of optimal size under the assumptions of~\cite{wu-zw}
(and the orientability of $\Gamma_\pm$). The novelty compared to~\cite{no-zw}
is the $o(h^{-2})$ resolvent bound in this gap. The short proof presented here is also more direct
than those in~\cite{wu-zw,no-zw}
because it relies on regular semiclassical analysis rather than
exotic symbol calculus, at a cost of not recovering the extensions (a), (b) discussed above.

We next show (Theorem~\ref{t:gap2}) the existence of deeper resonance free strips, under the additional assumptions of
\emph{$r$-normal hyperbolicity} and \emph{pinching}. The second gap and a Weyl law for resonances in between
the two gaps were previously proved in~\cite{nhp} under the same assumptions. The present paper
essentially removes the projector~$\Pi$ from the method of~\cite{nhp}, 
working directly with the pseudodifferential operators~$\Theta^+_j$ instead (see the discussion
of the proof below).
This shows the existence of additional resonance free strips, but does
not recover the Weyl law or the structure of $\Pi$ (which is important in understanding wave
decay, see~\cite{kdsu}). For a different yet related setting of Pollicott--Ruelle resonances for contact Anosov flows,
existence of multiple gaps under a pinching condition and a Weyl law
for the first band of resonances was proved in~\cite{fa-ts3}.
Our methods also bear some similarities to the recent work~\cite{dfg}
on resonances for the geodesic flow on hyperbolic quotients; the horocyclic
operators $\mathcal U_-$ of~\cite{dfg} play the same role as the operators $\Theta^+_j$
in this paper.

We finally show a lower bound of $h^{-1}\sqrt{\log(1/h)}$ on the one-sided cutoff resolvent
on the real line, complementing the upper bounds of~\cite{bu-zw,da-va3}~-- see~\eqref{e:real-bound-cut}
and~\S\ref{s:lower}.

\smallsection{Motivation}
We first give a brief introduction to resolvent estimates
on a model example.
Assume that $(M,g)$ is a Riemannian manifold isometric to the Euclidean space $\mathbb R^n$
outside of a compact set, and $n$ is odd. Then~\cite[\S4.2]{res} the resolvent
$$
R_g(\omega):=(-\Delta_g-\omega^2)^{-1}:L^2(M)\to H^2(M),\quad \Im\omega>0
$$
admits a meromorphic continuation to $\omega\in\mathbb C$ as a family of operators
$L^2_{\comp}(M)\to H^2_{\loc}(M)$, and its poles are called \emph{resonances}.
We say that $R_g$ has a \emph{spectral gap of size $\nu>0$ with loss of $m\geq 0$ derivatives},
if there exists $C_0>0$ such that
\begin{equation}
  \label{e:lalagap}
\|\chi R_g(\omega)\chi\|_{L^2\to L^2}\leq C_\chi|\omega|^{-1+m},\quad
|\Re\omega|\geq C_0,\quad
\Im\omega\in [-\nu,1]
\end{equation}
for all $\chi\in C_0^\infty(M)$. If~\eqref{e:lalagap} holds, then each solution to the wave equation
$$
(\partial_t^2-\Delta_g)u=0,\quad
u|_{t=0}=\chi u_0\in H^1_{\comp}(M),\quad
u_t|_{t=0}=\chi u_1\in L^2_{\comp}(M)
$$
satisfies the \emph{resonance expansion} (assuming there are no resonances on $\{\Im\omega=-\nu\}$)
$$
u(t,x)=\sum_j \sum_{k=0}^{J(\omega_j)-1} t^k e^{-it\omega_j} u_{jk}(x)+ u_R(t,x),
$$
with the sum above over the finitely many resonances in $\{\Im\omega\geq -\nu\}$ with multiplicities
$J(\omega_j)$, and the remainder is exponentially decaying in $t\geq 0$:
$$
\|e^{\nu t} \chi u_R\|_{H^1_{t,x}(\{t>0\})}\leq C_\chi(\|u_0\|_{H^{1+m}}+\|u_1\|_{H^m}).
$$
See for instance~\cite[Proposition~2.1]{xpd} for the proof. We see that the size $\nu$ of the spectral
gap gives the rate of exponential decay of the remainder, while $m$ is the number of derivatives lost
in the estimate. Whether or not~\eqref{e:lalagap} holds depends on the structure of the \emph{trapped set}
for the geodesic flow $\varphi_t:T^*M\to T^*M$
$$
K=\{(x,\xi)\in T^* M\mid |\xi|_g\in (1/2,2),\ \exists V\Subset T^*M:
\varphi_t(x,\xi)\in V\quad\text{for all }t\in\mathbb R\}
$$
The bound~\eqref{e:lalagap} is equivalent to the estimate
\begin{equation}
  \label{e:lalagaph}
\|\chi R_h(\lambda)\chi\|_{L^2\to L^2}\leq C_\chi h^{-1-m},\quad
\Re\lambda=\pm 1,\quad
\Im\lambda\in [-\nu h,h],\quad
0<h\ll 1
\end{equation}
where $R_h(\lambda)=h^{-2}R_g(h^{-1}\lambda)$ is the semiclassical resolvent:
\begin{equation}
  \label{e:scatres}
R_h(\lambda)=(-h^2\Delta_g-\lambda^2)^{-1}:L^2(M)\to H^2(M),\quad
\Im\lambda>0.
\end{equation}
The bounds~\eqref{e:lalagaph} for $\Re\lambda=1$ and $\Re\lambda=-1$ are equivalent
(by taking adjoints), thus we assume that $\Re\lambda=1$.

By the gluing method of
Datchev--Vasy~\cite[\S4.1]{da-va1}, it suffices to prove the bound for the model resolvent
\begin{equation}
  \label{e:modelb}
\|(-h^2\Delta_{g'}-\lambda^2-iQ')^{-1}\|_{L^2(X)\to L^2(X)}\leq C h^{-1-m},
\end{equation}
where $(X,g')$ is a compact Riemannian manifold which contains a part $X_M$ isometric to $(M\cap B_{R_0},g)$
where $B_{R_0}$ is the ball of radius $R_0\gg 1$ in $\mathbb R^n$, and
$Q'\in\Psi^2_h(X)$ is a semiclassical pseudodifferential operator
(see~\S\ref{s:prelim} for notation) such that $\sigma(Q')\geq 0$ everywhere
and $Q'$ is elliptic on $X\setminus X_M$ and supported away from $X_M\cap B_{R_0/2}$.
The operator $Q'$, as well as the operators $Q'',Q$ introduced below, are called \emph{complex absorbing operators}
and generalize complex absorbing potentials used for instance in quantum chemistry.

To make the setup of~\eqref{e:R} apply, we need
to make the dependence on $\lambda$ linear. For that, take self-adjoint compactly microlocalized operators
$P,Q''\in\Psi^{\comp}_h(X)$ such that
$$
-h^2\Delta_{g'}-iQ'=(P-iQ'')^2+\mathcal O(h^\infty)\quad\text{microlocally on }
\{1/2<|\xi|_{g'}<2\}.
$$
See for instance~\cite[Lemma~4.6]{g-s}; moreover, we have in $\{1/2<|\xi|_{g'}<2\}$,
$$
\sigma(P)-i\sigma(Q'')=\sqrt{|\xi|_{g'}^2-i\sigma(Q')},
$$
where $\sqrt{\cdot}$ maps positive numbers to positive numbers,
so in particular $\sigma(Q'')\geq 0$ everywhere.
Then (see~\cite[Lemma~4.3]{nhp} for a more general argument)
$$
-h^2\Delta_{g'}-\lambda^2-iQ'=(P-iQ''+\lambda)(P-iQ''-\lambda)\quad\text{microlocally on }
\{1/2<|\xi|_{g'}<2\}.
$$
By the elliptic estimate~\cite[Proposition~2.4]{zeta}, it suffices to prove a bound
on $A(-h^2\Delta_{g'}-\lambda^2-iQ')^{-1}$ for some $A\in\Psi^{\comp}_h(X)$ which
is elliptic on
$$
\Omega:=\{|\xi|_{g'}^2-i\sigma(Q')=1\}\subset \{x\in X_M,\ |\xi|_g=1\}.
$$
Since $P-iQ''+\lambda$ is elliptic on $\Omega$,
we reduce~\eqref{e:modelb} to the bound
\begin{equation}
  \label{e:modelb2}
\|(P-iQ-\lambda)^{-1}\|_{L^2(X)\to L^2(X)}\leq Ch^{-1-m},
\end{equation}
where $Q\in\Psi^0_h(X)$, $\sigma(Q)\geq 0$ everywhere, and
$Q=Q''$ microlocally near $\Omega$. In this paper,
we prove the bound~\eqref{e:modelb2}
for the case when the trapped set $K$ has a \emph{normally hyperbolic}
structure.

\pagebreak

\smallsection{Setup}
Let $X$ be a compact manifold with a fixed volume form and
\begin{equation}
  \label{e:R}
R(\lambda;h):= (P(h)-iQ(h)-\lambda)^{-1}:L^2(X)\to L^2(X),
\end{equation}
where $P(h),Q(h)\in\Psi^{0}_h(X)$ (henceforth denoted simply $P,Q$) are self-adjoint
semiclassical pseudodifferential operators of order $0$ with principal
symbols $p,q$ (see~\S\ref{s:prelim} for
the semiclassical notation used) and $\lambda\in\mathbb C$ satisfies
$\lambda=\mathcal O(h)$. We furthermore
assume that $q\geq 0$ everywhere and $q(x,\xi)>0$ for $|\xi|$ large enough.
The family $R(\lambda;h)$ is meromorphic in $\lambda$,
and we call its poles \emph{resonances}.
(The resonances for the model resolvent~\eqref{e:R} need not coincide with
resonances for the scattering resolvent~\eqref{e:scatres}, but a spectral gap
for~\eqref{e:R} implies a spectral gap for~\eqref{e:scatres}.)

\begin{figure}
\includegraphics{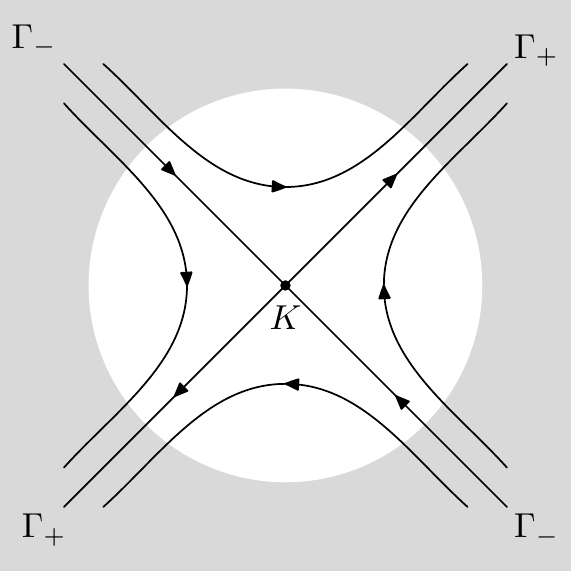}
\qquad
\includegraphics{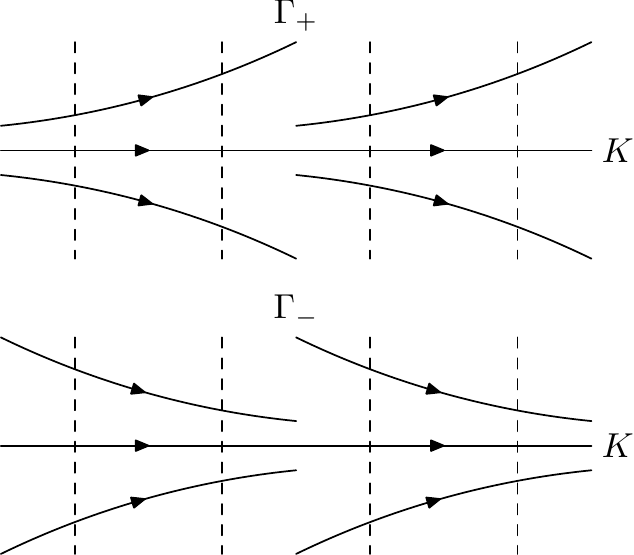}
\caption{An illustration of normally hyperbolic trapping, with the Hamiltonian flow
lines of $p$ on the whole $T^*X$ shown on the left and the restriction of the flow
to $\Gamma_\pm$, on the right. The shaded region is the set $\{q>0\}$ where
the complex absorbing operator takes over. The dashed lines are the flow
lines of $H_{\varphi_\pm}$, see \S\ref{s:dynamics}.}
\label{f:trapping}
\end{figure}

As a particular application,
\cite{vasy1} shows exponential decay of linear waves on Kerr--de Sitter black holes
(modulo a finite dimensional space) using a resolvent estimate of type~\eqref{e:main}.
We refer the reader to~\cite{vasy1,kdsu} for further discussion
of the relation between resolvent bounds and wave decay, and for an overview
of previous results on wave decay for black holes.
We remark that the analysis of trapping is critical for the understanding of semilinear and quasilinear
wave equations on black holes~-- see the work of Hintz--Vasy~\cite{hi-va1,hi-va2}. In particular, the loss
of regularity in the decay estimate (quantified by $m>0$ in~\eqref{e:lalagap}) is
the reason why~\cite{hi-va2} has to invoke Nash--Moser theory.

We also point out that normally hyperbolic (in fact, $r$-normally hyperbolic) trapped sets
appear naturally in the semiclassical theory of chemical reaction dynamics, see \cite{gsww}
for a physical description and~\cite[Remark~1.1]{no-zw} for a mathematical explanation.

Our results also hold (with the same proofs) in the general
framework of~\cite[\S4.1]{nhp}, which does not use
complex absorbing operators and applies to a variety of scattering problems.
In fact, \cite[\S\S4.2, 8.1]{nhp} reduces the general case microlocally to a neighborhood of the trapped set,
providing an alternative to the gluing method discussed in the motivation section above.

We make the following normally hyperbolic trapping assumptions (see Figure~\ref{f:trapping};
these assumptions provide a definition of $\Gamma_\pm$ and $K$ in our framework):
\begin{enumerate}
\item \label{a:1}
$\Gamma_\pm$ are codimension 1 orientable $C^\infty$ submanifolds of $T^*X$
such that $\Gamma_\pm\cap \{p=0\}\cap\{q=0\}$ are compact;
\item  if $(x,\xi)\in\{p=0\}\setminus\Gamma_\pm$, then $e^{\mp tH_p}(x,\xi)\in \{q>0\}$
for some $t\geq 0$;
\item the Hamiltonian field $H_p$ is tangent to $\Gamma_\pm$;
\item $\Gamma_\pm$ intersect transversely, $K=\Gamma_+\cap \Gamma_-$ is called the \emph{trapped set}
and we assume that $\WFh(Q)\cap K\cap \{|p|\leq\delta\}=\emptyset$
for $\delta>0$ small enough;
\item $K$ is a symplectic codimension 2 submanifold of $T^*X$;
\item \label{a:5}
if $v\in T_K\Gamma_\pm$, then $de^{\mp tH_p}\cdot v$ exponentially approaches
$TK\subset T_K\Gamma_\pm$ as $t\to +\infty$.
\end{enumerate}
Assumptions~\eqref{a:1}--\eqref{a:5} hold for subextremal
Kerr and Kerr--de Sitter black holes with small cosmological constant,
see~\cite[\S2]{wu-zw}, \cite[\S6.4]{vasy1},
and~\cite[\S3.2]{kdsu}. Note that, as in~\cite{nhp}, $\Gamma_\pm$
are open subsets of the full incoming/outgoing tails of the flow cut off
to a small neighborhood of $\{p=0\}\cap \{q=0\}$ (and thus are noncompact manifolds without boundary);
as in~\cite{vasy1} (see also~\cite[\S3.5]{kdsu}), one needs to embed the Kerr(--de Sitter) trapped set into a compact manifold
without boundary.

We furthermore define $0<\nu_{\min}\leq\nu_{\max}$ as the maximal and the minimal
numbers such that for each $\varepsilon>0$ there exists a constant $C$ such that
for each $v\in T_K\Gamma_\pm$
$$
C^{-1}e^{-(\nu_{\max}+\varepsilon)t} |\pi(v)|
\leq |\pi(de^{\mp tH_p}\cdot v)|\leq
Ce^{-(\nu_{\min}-\varepsilon)t} |\pi(v)|,\quad
t\geq 0,
$$
where $\pi:T_K\Gamma_\pm\to T_K\Gamma_\pm$ is any fixed smooth linear projection map
whose kernel is equal to $TK$.
In other words, $\nu_{\min}$ and $\nu_{\max}$ are the minimal and
maximal expansion rates in directions transversal to the trapped set.


\smallsection{Results}
Our first result is a resonance free strip with a polynomial resolvent bound:
\begin{theo}
  \label{t:gap1}
For each $\varepsilon>0$ and $h$ small enough depending on $\varepsilon$
(see Figure~\ref{f:gaps})
\begin{equation}
  \label{e:main}
\|R(\lambda)\|_{L^2(X)\to L^2(X)}=o(h^{-2})\quad\text{if }
|\lambda|=\mathcal O(h),\
\Im\lambda>-(\nu_{\min}-\varepsilon)h/2.
\end{equation}
\end{theo}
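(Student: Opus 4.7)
My plan is to prove Theorem~\ref{t:gap1} by (i) a microlocal reduction near the trapped set $K$ using propagation of singularities with complex absorption; (ii) a positive-commutator / ladder-operator estimate near $K$ built from the defining functions of $\Gamma_\pm$; and (iii) a semiclassical defect-measure argument to upgrade the resulting bound from $O(h^{-2})$ to $o(h^{-2})$.

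First, since $\Gamma_\pm$ are smooth, orientable codimension-$1$ submanifolds of $T^*X$ and $H_p$ is tangent to them, I would choose smooth global defining functions $\varphi_\pm\in C^\infty(T^*X)$ with $\Gamma_\pm=\{\varphi_\pm=0\}$; tangency forces $H_p\varphi_\pm = \alpha_\pm\varphi_\pm$ for smooth $\alpha_\pm$ near $K$. By assumption~(6) and the definition of $\nu_{\min}$, after rescaling $\varphi_\pm$ one may arrange $-\alpha_+|_K,\ \alpha_-|_K\ge\nu_{\min}-\tfrac{\varepsilon}{4}$; symplecticity of $K$ (assumption~(5)) lets me further normalize so that $\{\varphi_+,\varphi_-\}|_K=1$, giving the hyperbolic normal form $p\sim c_K\varphi_+\varphi_-$ near $K$, with $c_K:=-\alpha_+|_K=\alpha_-|_K\ge\nu_{\min}-\tfrac{\varepsilon}{4}$.

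Next, the complex absorption $Q$ combined with the escape condition~(2) and standard semiclassical propagation of singularities (cf.~\cite{nhp}) reduces the estimate to controlling $\|Au\|_{L^2}$ for $u=R(\lambda)f$ and $A\in\Psi^{\comp}_h$ microlocalized in a small neighborhood $U$ of $K$. For this, quantize the defining functions to $\Theta^\pm := \Op^w(\chi\varphi_\pm)\in\Psi^{\comp}_h$, where $\chi\in C_c^\infty$ is supported near $K$. The Weyl-calculus ladder identity $[P,\Theta^+] = -ih\alpha_+\Theta^+ + O_{L^2\to L^2}(h^2)$ (together with $\WFh(Q)\cap K=\emptyset$, so $[Q,\Theta^+]=O(h^\infty)$ microlocally near $K$) gives that $\Theta^+u$ satisfies the shifted resolvent equation
\[
\bigl(P-iQ-(\lambda+ihc_K)\bigr)\,\Theta^+u = \Theta^+ f + O(h^2)\|u\|.
\]
Under the hypothesis $\Im\lambda > -(\nu_{\min}-\varepsilon)h/2$, the shifted spectral parameter $\lambda+ihc_K$ has imaginary part $\ge\tfrac{\varepsilon h}{4}$, so the dissipative bound $\|(P-iQ-\mu)^{-1}\|\le 1/\Im\mu$ (valid for $\Im\mu>0$, since $Q\ge 0$) yields $\|\Theta^+u\|_{L^2}=O(h^{-1}\|f\|_{L^2})+O(h)\|u\|_{L^2}$. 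A symmetric positive-commutator estimate with test symbol $\chi^2\varphi_-^2$ controls $\|\Theta^-u\|_{L^2}$ analogously, and the remaining contribution of $u$ essentially supported on $K$ is treated by the Weyl-calculus factorization
\[
P-iQ \;\equiv\; c_K\,\Theta^-\Theta^+ - \tfrac{ihc_K}{2} \pmod{O(h^2)+\text{terms vanishing to second order on }K},
\]
whose $ihc_K/2$ "zero-point" shift (read off from $[\Theta^+,\Theta^-] = -ih\,\Op^w(\chi^2)+O(h^2)$) is the source of the sharp factor $\nu_{\min}/2$ in the gap size.

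Finally, to upgrade $O(h^{-2})$ to $o(h^{-2})$, I would argue by contradiction: assuming the bound fails, extract a normalized sequence $u_n$ with $\|(P-iQ-\lambda_n)u_n\|_{L^2}\le c\,h_n^2$, producing a semiclassical defect measure $\mu$ on $T^*X$. The microlocal reduction forces $\supp\mu\subset K$, and the ladder / positive-commutator estimates above force $\mu\equiv 0$, a contradiction. The main obstacle will be extracting the sharp factor $1/2$ in the gap $\nu_{\min}h/2$: a naive positive commutator with $\Op^w(\chi^2\varphi_+^2)$ alone would yield only the constant $\nu_{\min}$, and recovering the half requires precise bookkeeping of the Weyl-calculus "zero-point" shift $ihc_K/2$ in the factorization of $P-iQ$ near $K$ — the semiclassical analogue of the ground-state energy of the hyperbolic quadratic form $\varphi_+\varphi_-$.
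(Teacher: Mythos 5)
Your high-level plan overlaps with the paper's: microlocal reduction to a neighborhood of $K$ via propagation and complex absorption, quantization of $\varphi_+$ to a ladder operator $\Theta^+$, a defect-measure argument for the $o(h^{-2})$ upgrade, and a contradiction scheme. The step where you derive $\|\Theta^+u\|=O(h)\|u\|$ from $[P,\Theta^+]\approx -ih\alpha_+\Theta^+$ and dissipativity of the shifted resolvent is essentially the paper's Lemma~\ref{l:theta-killed}, modulo the need to upgrade a \emph{microlocal} commutator identity near $K$ to a \emph{global} $L^2$ estimate via propagation of singularities (the paper does this carefully in Lemma~\ref{l:noupper}). However, two of your steps do not hold up.

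\textbf{(a)} You claim a symmetric positive-commutator estimate with test symbol $\chi^2\varphi_-^2$ controls $\|\Theta^-u\|_{L^2}$ analogously. This is false. Since $H_p\varphi_-=+c_-\varphi_-$ (opposite sign to $\varphi_+$), the commutator shifts the spectral parameter for $\Theta^-u$ \emph{into} the lower half-plane, so the dissipative bound does not apply; and indeed $\Theta^-u$ is genuinely not small — for the model $P_0={h\over i}(x\partial_x+1/2)$ with resonant state $u_0=\const$, $\Theta^-u_0=x u_0$ has $L^2$ norm of order one near $x=0$. \textbf{(b)} The Weyl factorization $P-iQ\equiv c_K\Theta^-\Theta^+-\tfrac{ihc_K}{2}+E$ is never turned into an estimate, and cannot be in the naive way: the error $E$ contains terms with symbol proportional to $\varphi_-^2$ (and commutator corrections involving $\{g,\varphi_\pm\}$ when $c_K$ is not constant), none of which are controlled by $\Theta^+u$. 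Even with $\|\Theta^+u\|=O(h)\|u\|$ in hand, one cannot invert $(-\tfrac{ihc_K}{2}-\lambda)$ to bound $\|u\|$, since $Eu$ is comparable; and the putative conclusion $\|u\|\lesssim h^{-1}\|f\|$ throughout the strip would be false (it would preclude resonances in the first band below the gap).

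The paper obtains the sharp factor $1/2$ by a different mechanism. From $\Theta_+u=\mathcal O(h)$ microlocally near $K$ it deduces a quantitative Lipschitz property of the defect measure $\mu$ transversal to $K$ along $\Gamma_+$ (Lemma~\ref{l:lipschiz}): $\mu\big(U_\delta\cap\{|\varphi_-|<\delta_0\}\big)\leq C\delta_0$. It then combines the propagation identity $\mu(e^{-tH_p}(U_\delta))=e^{2t\Im\omega}\mu(U_\delta)$ with the transversal contraction $e^{-tH_p}(U_\delta\cap\Gamma_+)\subset\{|\varphi_-|<\delta e^{-(\nu_{\min}-\varepsilon)t}\}$, giving $e^{2t\Im\omega}\lesssim e^{-(\nu_{\min}-\varepsilon)t}$ and hence $\Im\omega\leq -(\nu_{\min}-\varepsilon)/2$. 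The $2$ in $e^{2t\Im\omega}$ comes from $\mu$ being quadratic in $u$, while the $1$ in the contraction exponent comes from the linear transversal dynamics; the $1/2$ is their ratio. This is the same ``zero-point'' phenomenon you gesture at, but realized as a measure-theoretic comparison rather than an operator factorization — and that realization is what sidesteps the uncontrolled $\varphi_-^2$ error terms that your approach runs into.
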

\noindent\textbf{Remarks}.
(i) Theorem~\ref{t:gap1} also extends to the case when $P,Q$ are operators
acting on sections of some vector bundle $\mathcal E$ over $X$, as
long as they are self-adjoint with respect to some smooth inner product on
the fibers of $\mathcal E$, and their principal symbols (see for instance~\cite[\S C.1]{zeta}
for a definition) are equal to $p \cdot \Id_{\mathcal E},q \cdot \Id_{\mathcal E}$
for $p,q\in C^\infty(X)$. 
We can also relax the assumption that $Q=\mathcal O(h^\infty)$ microlocally near $K\cap \{p=0\}$,
requiring instead that $q=0$ 
and $\sigma(h^{-1}Q)\geq 0$ near $K\cap \{p=0\}$ (as an endomorphism of $\mathcal E$). Indeed, the proof of Lemma~\ref{l:theta-killed}
still applies, and the relation~\eqref{e:measure-below} becomes a lower
bound on $\mu(e^{-tH_p}(U_\delta))$. We leave the details to the reader.

(ii) We note that polynomial resolvent bounds are known in a variety of other situations~--
see the review of Wunsch~\cite{wu} for an overview of the subject and~\cite{chr} for other recent results.
Also the much earlier work of G\'erard--Sj\"ostrand~\cite{ge-sj} treated normally hyperbolic trapping
in the analytic category with (implicit) exponential resolvent bounds.

\medskip

\begin{figure}
\includegraphics{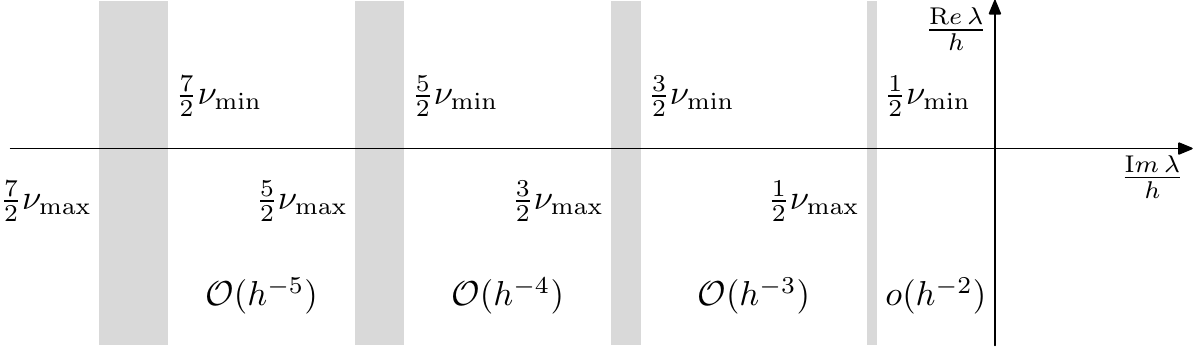}
\caption{The resonance free strips of Theorems~\ref{t:gap1} and~\ref{t:gap2}
(ignoring $\varepsilon$) are in white, with bounds on the resolvent $R(\lambda)$ written in the strips
and (potential) bands of resonances in between
the strips shaded.}
\label{f:gaps}
\end{figure}

We now make the stronger assumption that the trapping is \emph{$r$-normally hyperbolic} for large $r$,
namely
\begin{equation}
  \label{e:r-nh}
\nu_{\min}>r\mu_{\max},
\end{equation}
where $\mu_{\max}$ is the maximal expansion rate along $K$, namely
$\|de^{tH_p}|_{TK}\|=\mathcal O(e^{(\mu_{\max}+\varepsilon)t})$ as
$|t|\to\infty$, for all $\varepsilon>0$. The condition~\eqref{e:r-nh}
means that the rate of expansion in directions transversal to $K$ is much larger
than the rate of expansion along $K$, and it holds for Kerr(--de Sitter) black holes
(see the references above).

Under the assumption~\eqref{e:r-nh}
and an appropriate pinching condition, we exhibit deeper resonance free
strips with polynomial resolvent bounds:
\begin{theo}
  \label{t:gap2}
Assume that the trapping is $r$-normally hyperbolic for all $r$.
Fix $m\geq 1$ and assume the pinching condition
$$
(m+1/2)\nu_{\min}>(m-1/2)\nu_{\max}.
$$
Then for each small $\varepsilon>0$ and $h$ small enough depending on $\varepsilon$, we have (see Figure~\ref{f:gaps})
\begin{equation}
  \label{e:gap2}
\begin{gathered}
\|R(\lambda)\|_{L^2(X)\to L^2(X)}\leq Ch^{-m-2}\quad\text{if }\\
|\lambda|=\mathcal O(h),\
h^{-1}\Im\lambda\in [-(m+1/2)\nu_{\min}+\varepsilon,-(m-1/2)\nu_{\max}-\varepsilon].
\end{gathered}
\end{equation}
\end{theo}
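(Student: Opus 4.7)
The plan is to construct pseudodifferential ``ladder'' operators $\Theta^+\in\Psi^0_h(X)$ whose commutator with $P-iQ$ shifts the spectral parameter imaginarily, and to iterate $m$ times to reduce the desired bound to Theorem~\ref{t:gap1}. This follows the strategy of~\cite{nhp} but bypasses the band projector $\Pi$, in the spirit of the horocyclic operators of~\cite{dfg}.

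First I would pick $\theta^+\in C^\infty(T^*X)$, a smooth defining function of $\Gamma_+$ near $K\cap\{p=0\}$ (using~\eqref{a:1} and orientability). By solving a transport equation along $H_p$ on $\Gamma_+$, made possible by assumptions~\eqref{a:1}--\eqref{a:5}, I arrange $H_p\theta^+=-\nu_+\theta^+$ on $\Gamma_+$, where $\nu_+\in C^\infty(\Gamma_+)$ is a smooth extension of the transversal expansion rate, with $\nu_+|_K$ valued in $[\nu_{\min},\nu_{\max}]$. Quantizing to $\Theta^+=\Op_h(\theta^+)$ and removing subprincipal errors iteratively (this is where $r$-normal hyperbolicity for $r=r(m)$ enters: the symbol $\theta^+$ must approximately diagonalize the transversal dynamics to high order), I produce the microlocal conjugation identity
$$
(P-iQ-\lambda)(\Theta^+)^m=(\Theta^+)^m\bigl(P-iQ-\lambda-ihm\,\Op_h(\nu_+)\bigr)+E_m,
$$
valid near $K\cap\{p=0\}$, with a negligible error $E_m$.

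If $\Im\lambda/h$ lies in the $m$-th gap $[-(m+1/2)\nu_{\min}+\varepsilon,-(m-1/2)\nu_{\max}-\varepsilon]$, then because $\nu_+|_K\in[\nu_{\min},\nu_{\max}]$ the shifted parameter $\tilde\lambda=\lambda+ihm\nu_+$ satisfies $\Im\tilde\lambda/h\in[-\nu_{\min}/2+\varepsilon,\,\nu_{\max}/2-\varepsilon]$; the pinching condition $(m+1/2)\nu_{\min}>(m-1/2)\nu_{\max}$ is exactly what makes this interval nonempty and places $\Im\tilde\lambda/h$ strictly above $-\nu_{\min}/2$. Thus Theorem~\ref{t:gap1} applies to the shifted equation, giving
$$
\|(\Theta^+)^m u\|=o(h^{-2})\bigl(\|(\Theta^+)^m f\|+\|E_m u\|\bigr)
$$
whenever $(P-iQ-\lambda)u=f$ and $u$ is microlocalized near $K$.

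The delicate final step is to convert the bound on $(\Theta^+)^m u$ into a bound on $u$, since $\Theta^+$ is not invertible (its symbol vanishes on $\Gamma_+$). I would pair $\Theta^+$ with a dual operator $\Theta^-\in\Psi^0_h(X)$ whose symbol is a smooth defining function of $\Gamma_-$, and exploit the symplecticity and codimension-$2$ structure of $K$ to obtain microlocal ellipticity of $\Theta^+\Theta^-$ off $K$. A positive-commutator/Gårding argument then yields an estimate of the schematic form $\|u\|^2\lesssim h^{-2}\|\Theta^+u\|^2+\|\Theta^-u\|^2+\text{propagation from }\{q>0\}$; iterating this $m$ times costs $m$ factors of $h^{-1}$, and combined with the $o(h^{-2})$ bound on $(\Theta^+)^m u$ gives the claimed $O(h^{-m-2})$. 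I expect this last step to be the hardest: without the projector $\Pi$ of~\cite{nhp}, the bookkeeping of the iterated Gårding and commutator errors must be arranged so that the losses telescope into exactly $m+2$ powers of $h^{-1}$, no more, which is precisely where the $r$-normal hyperbolicity is exploited most heavily.
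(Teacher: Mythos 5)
Your overall strategy---conjugate by a ladder of pseudodifferential operators with principal symbol a defining function of $\Gamma_+$ so that the effective spectral parameter is pushed up into a region where a positive-commutator argument closes, then descend to recover a bound on $u$---is the same as the paper's. Two remarks on the construction: the paper builds a \emph{sequence} of operators $\Theta^+_j$ with the \emph{same} principal symbol $\varphi_+$ but different subprincipal parts, determined order by order via the transport equation of Lemma~\ref{l:transport} (this is where $r$-normal hyperbolicity enters), rather than a single $\Theta^+$ raised to the $m$-th power; a single fixed $\Theta^+$ will in general not close the conjugation identity to $\mathcal O(h^\infty)$. Also, what you invoke is not Theorem~\ref{t:gap1} for the original $R(\lambda)$ but a version for $P-ihW-\lambda$; in the paper this is Lemma~\ref{l:noupper}, which requires $\Re\sigma(W)+h^{-1}\Im\lambda>0$, so the ladder must be climbed $m+1$ times (not $m$) to reach the ``upper half plane,'' which is what the condition $h^{-1}\Im\lambda>-(m+1)(\nu_{\min}-\varepsilon)+\varepsilon$ buys. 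These are fixable.

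The genuine gap is the final descent step. Your proposed mechanism---microlocal ellipticity of $\Theta^+\Theta^-$ ``off $K$'' plus a G\aa rding estimate of the form $\|u\|^2\lesssim h^{-2}\|\Theta^+u\|^2+\|\Theta^-u\|^2+\cdots$---does not work. First, $\sigma(\Theta^+\Theta^-)=\varphi_+\varphi_-$ vanishes on all of $\Gamma_+\cup\Gamma_-$, not just on $K$, so there is no ellipticity off $K$. Second, and more seriously, $u$ (and each $u_j$) concentrates microlocally on $\Gamma_+\cap\{p=0\}$, which is exactly the zero set of $\varphi_+$; on this set $\Theta^-$ is elliptic, so the term $\|\Theta^- u\|^2$ on the right-hand side of your schematic estimate is comparable to $\|u\|^2$ itself, and the estimate carries no information. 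There is no elliptic or commutator estimate that bounds $u$ by $\Theta^+ u$ alone on the set where $\varphi_+$ degenerates. What actually makes the descent work in the paper is a semiclassical defect measure argument (Lemmas~\ref{l:lab} and~\ref{l:lab2}): the hypothesis $\Theta^+_j v=o(h)$ forces the measure of $v$, restricted to $\Gamma_+$, to scale \emph{linearly} in the transversal coordinate $\varphi_-$ (two-sided Lipschitz, Lemma~\ref{l:lab}), while the shifted equation $(P-ihW^+_j-\lambda)v=o(h)$ forces it to scale \emph{exponentially} under the flow $e^{-tH_p}$ with rate $2(\Im\omega+j c_+)$. These two scalings are incompatible unless the measure is zero, \emph{provided} $h^{-1}\Im\lambda$ lies outside the band $[-(j+1/2)(\nu_{\max}+\varepsilon)-\varepsilon,\,-(j+1/2)(\nu_{\min}-\varepsilon)+\varepsilon]$; the pinching condition is exactly what guarantees the target strip avoids all these bad bands for $j=0,\dots,m$. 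None of this is a G\aa rding inequality, and the operator $\Theta^-$ never enters. So while your opening gambit is right, the closing argument needs to be replaced by the measure-theoretic comparison.
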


\noindent\textbf{Remarks}. (i) Since semiclassical arguments
only require finitely many derivatives, it is straightforward to see that Theorem~\ref{t:gap2} is also
valid for trapping which is $r$-normally hyperbolic for some fixed $r$, where $r$ is chosen large
enough depending only on $m$ and on the dimension of $X$. Such trapping is structurally stable
under perturbations of the symbol $p$, see~\cite{HPS} and~\cite[\S5.2]{nhp}.

(ii) Unlike Theorem~\ref{t:gap1}, we only prove Theorem~\ref{t:gap2} in the scalar case,
rather than for operators on general vector bundles, and for $Q=\mathcal O(h^\infty)$
near $K\cap \{p=0\}$.

(iii) The pinching condition is true (for all $m$) for the Schwarzschild(--de Sitter)
black hole. In the Kerr case, it was studied 
for $m=1$ in~\cite[Figure 2(a) and \S3.3]{kdsu}.

(iv) A more careful argument, using in part the proof of
Theorem~\ref{t:gap1}, shows that the bound~\eqref{e:gap2} could
be improved to $\mathcal O(h^{-m-1})$ for $h^{-1}\Im\lambda\geq -m\nu_{\min}+\varepsilon$
and to $o(h^{-m-2})$ elsewhere in the strip, and more optimal bounds can be obtained
using complex interpolation. The transition from one bound to the other at $h^{-1}\Im\lambda\geq -m\nu_{\min}$ is similar in spirit to the transition from the
nontrapping bound $\mathcal O(h^{-1})$ in the upper half-plane to the $o(h^{-2})$ bound
in the first gap. The method of Theorem~\ref{t:gap1} produces the slightly better bound of
$o(h^{-m-2})$ because Lemma~\ref{l:lipschiz} applies under slightly more relaxed
conditions on the norm of $\Theta^+u$ than Lemma~\ref{l:lab}.

\smallsection{Ideas of the proofs}
The proof of Theorem~\ref{t:gap2} is based on constructing
pseudodifferential operators $\Theta^+_j,W^+_j$ such that microlocally near $K\cap \{p=0\}$,
we have
$$
\Theta^+_m\dots\Theta^+_0 (P-\lambda) = (P-ihW^+_{m+1}-\lambda)\Theta^+_m\dots\Theta^+_0+\mathcal O(h^\infty).
$$
The principal symbol of $W^+_{m+1}$ is bounded from below
by $(m+1)(\nu_{\min}-\varepsilon)$ near $K\cap \{p=0\}$, which has the effect of shifting the spectral
parameter $\lambda$ into the upper half-plane. Since there are no resonances in the upper half-plane, we obtain for $m$ large enough depending on $\Im\lambda$,
if $(P-iQ-\lambda)u=\mathcal O(h^\infty)$, then
\begin{equation}
  \label{e:new-eq}
\Theta^+_m\dots \Theta^+_0 u=\mathcal O(h^\infty)\quad\text{microlocally near }K\cap \{p=0\}.
\end{equation}
This is a pseudodifferential equation on $u$. The principal symbol of each $\Theta^+_j$ is
a defining function $\varphi_+$ of $\Gamma_+$, and the equation~\eqref{e:new-eq}
gives information about the phase space behavior of $u$ along the Hamiltonian
flow lines of $\varphi_+$, which are transversal to
$K$ (see Figure~\ref{f:trapping}). Together with the information about the decay of $u$ as one approaches $K$,
which depends on $\Im\lambda$, one can show that $\lambda$ has to avoid the strips in~\eqref{e:gap2}.

A good example to keep in mind, considered in \S\ref{s:lower}, is
$$
P_0={h\over i}(x\partial_x+1/2).
$$
The relevant solutions to the equation $(P_0-\lambda)u=0$ are those
whose wavefront set lies on $\Gamma_+=\{\xi=0\}$, which in particular
means that $u$ has to be smooth. This gives the resonances
$\lambda_m=-(m+{1\over 2})ih$, $m=0,1,\dots$ with
resonant states $u_m(x)=x^m$ and the operator
$\partial_x$ maps $u_m$ to
$mu_{m-1}$. For the case of $P_0$, we can
then take $\Theta^+_j={h\over i}\partial_x$ and $W^+_j=j$.

In the case of Theorem~\ref{t:gap1}, we cannot construct the operators $\Theta^+_j$, however
a rougher version of the operator $\Theta^+_0$ gives enough information on the concentration
of $u$ along the flow lines of $H_{\varphi_+}$ to still obtain an estimate.
The proof of Theorem~\ref{t:gap1} in given in \S\ref{s:gap1} and
the proof of Theorem~\ref{t:gap2} is given in \S\ref{s:gap2}.
Both sections use preliminary dynamical and analytical constructions
of \S\ref{s:prelim}.

\smallsection{Bounds on the real line}
For $\lambda$ on the real line, it is shown in~\cite{wu-zw} that
\begin{equation}
  \label{e:real-bound}
\|R(\lambda)\|_{L^2\to L^2}\leq Ch^{-1}\log(1/h),\quad\text{if }|\lambda|=\mathcal O(h),\ \Im\lambda=0.
\end{equation}
The results of Burq--Zworski~\cite{bu-zw} and Datchev--Vasy~\cite{da-va3}
(see also the papers of Burq~\cite{Burq1,Burq2}, Vodev~\cite{Vodev}, and Datchev~\cite{Datchev} for bounds
on the two-sided cutoff resolvent) deduce the following
improved cutoff resolvent bound:
\begin{equation}
  \label{e:real-bound-cut}
\|R(\lambda)A\|_{L^2\to L^2}\leq Ch^{-1}\sqrt{\log(1/h)},\quad\text{if }|\lambda|=\mathcal O(h),\ 
\Im\lambda=0,
\end{equation}
if $A$ is an $h$-pseudodifferential operator such that $\WFh(A)\cap K=\emptyset$.
The bound~\eqref{e:real-bound-cut} can be improved to $\mathcal O(h^{-1})$
(which is the estimate in the case there is no trapping) if one puts $A$ on both sides
of $R(\lambda)$, or if the principal symbol of $A$
vanishes on $\Gamma_-$~-- see~\cite{da-va2,hi-va3}.

In~\S\ref{s:lower}, we show that the bound~\eqref{e:real-bound-cut} is sharp
by giving an example of an operator $P-iQ$ and $A$ for which the corresponding
lower cutoff resolvent bound holds.

\section{Preliminaries}
\label{s:prelim}

The proofs in this paper rely on the methods of \emph{semiclassical analysis}.
We refer the reader to~\cite{e-z} for an introduction to the subject, and
to~\cite[Appendix~C.2]{zeta} for the notation used here. In particular,
we use the algebra $\Psi_h^0(X)$ of semiclassical pseudodifferential operators
with symbols in the class $S^0(X)$. We moreover require that symbols
of the elements of $\Psi_h^0(X)$ are \emph{classical} in the
sense that they have an asymptotic expansion in nonnegative
integer powers of $h$. Denote by
$$
\sigma:\Psi_h^0(X)\to S^0(X)
$$
the principal symbol map
and recall the standard identities for $A,B\in\Psi_h^0(X)$,
\begin{equation}
  \label{e:identities}
\sigma(AB)=\sigma(A)\sigma(B),\quad
\sigma(A^*)=\overline{\sigma(A)},\quad
\sigma(h^{-1}[A,B])=-i\{\sigma(A),\sigma(B)\}.
\end{equation}
Moreover, each $A\in\Psi_h^0(X)$ acts $L^2(X)\to L^2(X)$ with norm bounded uniformly in $h$.
In fact, we have the bound~\cite[Theorem~5.1]{e-z}
\begin{equation}
  \label{e:norm-psi}
\limsup_{h\to 0}\|A\|_{L^2(X)\to L^2(X)}\leq \|\sigma(A)\|_\infty:=\sup_{T^*X}|\sigma(A)|.
\end{equation}

We will mostly use the subalgebra of \emph{compactly microlocalized} pseudodifferential
operators, $\Psi_h^{\comp}(X)\subset \Psi_h^0(X)$. For $A\in\Psi_h^{\comp}(X)$, its
\emph{wavefront set} $\WFh(A)$ is a compact subset of $T^*X$. We say that
$A=B+\mathcal O(h^\infty)$ microlocally in some set $U\subset T^*X$ if $\WFh(A-B)\cap U=\emptyset$.

We will often consider sequences $u(h_j)\in L^2(X)$,
where $h_j\to 0$ is a sequence of positive numbers; we typically suppress the
dependence on $j$ and simply write $u=u(h_j)$ and $h=h_j$.
For such a sequence, we say that $u=O(h^\delta)$ \emph{microlocally} in some open
set $U\subset T^*X$, if $\|Au\|_{L^2}=O(h^\delta)$ for each $A\in\Psi^{\comp}_h(X)$
such that $\WFh(A)\subset U$. The notion of $u=o(h^\delta)$ in $U$ is defined similarly.

We may also consider distributions in $L^2(X;\mathcal E)$, where $\mathcal E$ is some
smooth vector bundle over $X$ with a prescribed inner product. One can
take pseudodifferential operators acting on sections of $\mathcal E$,
see~\cite[Appendix~C.1]{zeta}, and the principal symbol of such an operator
is a section of the endomorphism bundle $\End(\mathcal E)$ over $T^*X$. However,
we will only consider \emph{principally scalar} operators, that is,
operators whose principal symbols have the form $a(x,\xi)\Id_{\mathcal E}$.
(Equivalently, these are the operators $A\in\Psi^k(X;\End(\mathcal E))$ such that
 for all $B\in\Psi^k(X;\End(\mathcal E))$, we have
$[A,B]=\mathcal O(h)$.)
The formulas~\eqref{e:identities} still hold for principally scalar operators. We henceforth
suppress $\mathcal E$ in the notation.

\subsection{Semiclassical defect measures}

Our proofs rely on the following definition, designed to capture the concentration
of a sequence of $L^2$ functions in phase space:
\begin{defi}
  \label{d:measures}
Assume that $h_j\to 0$ and
$u=u(h_j)\in L^2(X)$ is bounded uniformly in $L^2$ as $j\to\infty$.
We say that $u$ converges to a nonnegative Radon measure $\mu$ on $T^*X$ if for each
$A=A(h)\in \Psi^{\comp}_h(X)$, we have
\begin{equation}
  \label{e:measures}
\langle A(h_j) u(h_j),u(h_j)\rangle\to\int_{T^*X}\sigma(A)\,d\mu\quad\text{as }j\to\infty.
\end{equation}
\end{defi}
See~\cite[Chapter~5]{e-z} for an introduction to defect measures; in particular,
\begin{itemize}
\item for each sequence $u(h_j)$ uniformly bounded in $L^2$, there exists a subsequence $h_{j_k}$ such that
$u(h_{j_k})$ converges to some $\mu$~\cite[Theorem~5.2]{e-z};
\item we have $\mu(U)=0$ for some open $U\subset T^*X$ if and only if $u=o(1)$ microlocally on $U$.
\end{itemize}
We now show the basic properties of semiclassical measures corresponding to approximate
solutions of differential equations:
\begin{lemm}
  \label{l:measure-elliptic} (Ellipticity)
Take $P\in\Psi^0_h(X)$ and denote $p=\sigma(P)$.
Assume that $u=u(h_j)$ converges to some measure $\mu$ and
$Pu=o(1)$ microlocally in some open set $U\subset T^*X$. Then
$\mu(U\cap \{p\neq 0\})=0$.
\end{lemm}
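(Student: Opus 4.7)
The plan is a standard elliptic parametrix argument. I would first reduce to showing that $\int_{T^*X}\chi\,d\mu=0$ for an arbitrary fixed nonnegative $\chi\in C_c^\infty(U\cap\{p\neq 0\})$; since $\mu$ is a Radon measure and the open set $U\cap\{p\neq 0\}$ is the union of the supports of such $\chi$, this will give $\mu(U\cap\{p\neq 0\})=0$.

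Fix such a $\chi$. Since $p$ is nonvanishing on $\supp\chi\subset U$, the function $b:=\chi/p$ is well-defined, smooth, and compactly supported inside $U\cap\{p\neq 0\}$. I would quantize it to $B\in\Psi^{\comp}_h(X)$ with $\sigma(B)=b$ and $\WFh(B)\subset U\cap\{p\neq 0\}$. By~\eqref{e:identities}, $\sigma(BP)=\chi$, so if $C\in\Psi^{\comp}_h(X)$ has $\sigma(C)=\chi$, then $BP=C+hD$ for some $D\in\Psi^0_h(X)$ bounded uniformly on $L^2$. Pairing against $u$ gives
\begin{equation*}
\langle Cu,u\rangle=\langle BPu,u\rangle-h\langle Du,u\rangle,
\end{equation*}
and the second term on the right is $\mathcal O(h)$ since $u$ is bounded in $L^2$.

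The crux is therefore to show $\|BPu\|_{L^2}=o(1)$. Here I would exploit $\WFh(B)\subset U$ by choosing a microlocal cutoff $\tilde A\in\Psi^{\comp}_h(X)$ with $\WFh(\tilde A)\subset U$ and $\sigma(\tilde A)\equiv 1$ near $\WFh(B)$, so that $B(I-\tilde A)$ has empty semiclassical wavefront set and hence $\mathcal O(h^\infty)$ operator norm on $L^2$. The hypothesis that $Pu=o(1)$ microlocally on $U$, applied with the test operator $\tilde A$, yields $\|\tilde APu\|_{L^2}=o(1)$, and therefore $\|BPu\|_{L^2}\leq\|B\tilde APu\|_{L^2}+\|B(I-\tilde A)Pu\|_{L^2}=o(1)$. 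Combining the estimates, $\langle Cu,u\rangle=o(1)$ as $h_j\to 0$, and Definition~\ref{d:measures} forces $\int\chi\,d\mu=0$, which completes the proof.

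The only subtle point is that the hypothesis on $Pu$ is microlocal in $U$ rather than global, which forces the cutoff maneuver described above; this is the main (and essentially only) nontrivial step, the rest being symbolic algebra in the classical pseudodifferential calculus.
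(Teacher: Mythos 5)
Your proposal is correct and is essentially the paper's own argument: take $B$ with $\sigma(B)=\chi/p$ and wavefront set in $U\cap\{p\neq 0\}$, note $\sigma(BP)=\chi$, and conclude $\int\chi\,d\mu=\lim\langle BPu,u\rangle=0$ from the microlocal smallness of $Pu$ on $U$. The paper states this in two lines by applying Definition~\ref{d:measures} directly to the operator $AP$; your version makes explicit the $\mathcal O(h)$ subprincipal term and the microlocal cutoff $\tilde A$, which the paper leaves implicit, but the underlying idea and mechanism are identical.
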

\begin{proof}
By~\eqref{e:identities}, we have for each $A\in \Psi_h^{\comp}(X)$ with $\WFh(A)\subset U$,
$$
\int_{T^*X} \sigma(A)p\,d\mu=\lim_{h\to 0} \langle APu,u\rangle=0.
$$
Since $\sigma(A)p$ can be any function in $C_0^\infty(U\cap \{p\neq 0\})$, we have
$\mu(U\cap \{p\neq 0\})=0$.
\end{proof}

\begin{lemm}
  \label{l:typical} (Propagation)
Take $P,W\in\Psi^0_h(X)$, denote $p=\sigma(P)$, $w=\sigma(W)$, and assume that
$P^*=P$. Assume that $u=u(h_j)$ converges to some measure $\mu$
and denote $f:=(P-ihW)u$. Then for each $a\in C_0^\infty(T^*X)$
and for each $Y\in\Psi_h^{\comp}(X)$ such that
$Y=1+\mathcal O(h^\infty)$ microlocally in a neighborhood of $\supp a$,
\begin{equation}
  \label{e:typical}
\bigg|\int_{T^*X} (H_p-2\Re w)a\,d\mu\bigg|\leq 2\|a\|_{\infty}\cdot \limsup_{h\to 0} (h^{-1}\|Yf\|_{L^2}\cdot \|Yu\|_{L^2}).
\end{equation}
In particular, if $f=o(h)$ microlocally in a neighborhood of $\supp a$, then
$$
\int_{T^*X} (H_p-2\Re w)a\,d\mu=0.
$$
\end{lemm}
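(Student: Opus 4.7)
The plan is to execute the standard commutator-trick proof for propagation of semiclassical defect measures, adapted to the non-self-adjoint operator $P - ihW$. Splitting $a$ into its real and imaginary parts reduces to the case of real $a$, and I would quantize $a$ to a self-adjoint operator $A \in \Psi_h^{\comp}(X)$ with $\sigma(A) = a$ and with $\WFh(A)$ contained in an open neighborhood $U$ of $\supp a$ on which $Y = I + \mathcal O(h^\infty)$ microlocally.

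The heart of the argument is to evaluate $\langle [P,A]u, u\rangle$ in two ways. Substituting $Pu = f + ihWu$ and using $P^* = P$, $A^* = A$, a short algebraic manipulation gives
$$
\langle [P,A]u, u\rangle \;=\; 2i\,\Im \langle Au, f\rangle \;-\; ih\,\langle (W^*A + AW)u, u\rangle.
$$
On the other hand, by \eqref{e:identities} the operator $ih^{-1}[P,A] \in \Psi_h^0(X)$ has principal symbol $H_p a$, while $\sigma(W^*A + AW) = 2a\,\Re w$. Multiplying the identity above by $i/h$ and passing to the limit $h \to 0$ via the defining property \eqref{e:measures} of $\mu$, the two computations combine to give
$$
\int_{T^*X}(H_p - 2\Re w)\,a\,d\mu \;=\; -2\lim_{h\to 0} h^{-1}\,\Im\langle Au, f\rangle.
$$

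To convert this identity into the stated inequality, I would insert $Y$ on both sides of the inner product: since $\WFh(A) \subset U$ and $Y - I$ is $\mathcal O(h^\infty)$ microlocally on $U$, the operators $(Y - I)A$ and $A(Y^* - I)$ are $\mathcal O(h^\infty)_{L^2 \to L^2}$, so $\langle Au, f\rangle = \langle AYu, Yf\rangle + \mathcal O(h^\infty)$, using that $\|u\|_{L^2}$ and $\|f\|_{L^2}$ are uniformly bounded in $h$. Applying Cauchy--Schwarz together with the norm bound $\limsup_{h\to 0}\|A\|_{L^2\to L^2} \leq \|a\|_\infty$ from \eqref{e:norm-psi} yields the desired estimate. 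For the \emph{in particular} statement, one selects an auxiliary cutoff $Y'$ with $\WFh(Y')$ contained in the neighborhood of $\supp a$ where $f = o(h)$ microlocally, so that $h^{-1}\|Y'f\|_{L^2} = o(1)$ and the right-hand side vanishes.

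The only real obstacle is bookkeeping: arranging the quantization $A$ so that $\WFh(A)$ fits inside the set where $Y$ is microlocally trivial, and controlling the $\mathcal O(h^\infty)$ error terms uniformly in $h$ when moving $Y$ across $A$ via adjoints; everything else is the classical Hörmander--Zworski commutator calculation with the extra bookkeeping of the subprincipal term $W$.
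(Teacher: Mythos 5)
Your proof is correct and follows essentially the same commutator argument as the paper: both compute the same quantity (you write it as $\langle [P,A]u,u\rangle$ and then divide by $ih$; the paper writes $(\langle Af,u\rangle-\langle Au,f\rangle)/ih$ directly, which is the same thing when $A^*=A$), identify the principal symbol $H_pa-2a\Re w$ via~\eqref{e:identities}, insert $Y$ on both sides of the pairing using the $\mathcal O(h^\infty)$ microlocal triviality, and conclude by Cauchy--Schwarz and~\eqref{e:norm-psi}. The only cosmetic difference is that you reduce to real $a$ and self-adjoint $A$ and keep $W$ general, whereas the paper keeps $a$ general and normalizes $W^*=W$ by absorbing $\Im W$ into $P$.
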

\begin{proof}
Take $A\in\Psi^{\comp}_h(X)$ with
$\sigma(A)=a$ and $Y=1+\mathcal O(h^\infty)$
near $\WFh(A)$. Without loss of generality, we may assume that
$W^*=W$; indeed, one can put the imaginary part of $W$ into $P$.
Then we have
\begin{equation}
\label{e:bazinga}
\begin{gathered}
{\langle A f,u\rangle-\langle Au,f\rangle\over ih}=
{\langle A (P-ihW)u,u\rangle-\langle (P+ihW)Au,u\rangle\over ih}\\
=\big\langle \big((ih)^{-1}[A,P]-(AW+WA)\big)u,u\big\rangle.
\end{gathered}
\end{equation}
By~\eqref{e:identities}, $\sigma\big((ih)^{-1}[A,P]-(AW+WA)\big)=H_pa-2wa$; therefore,
the right-hand side of~\eqref{e:bazinga} converges in absolute value to
the left-hand side of~\eqref{e:typical}. The left-hand side of~\eqref{e:bazinga}
is equal to
$$
{\langle AYf,Yu\rangle-\langle AYu,Yf\rangle\over ih}+\mathcal O(h^\infty),
$$
and its limit as $h\to 0$ is bounded by the right-hand side of~\eqref{e:typical}
by~\eqref{e:norm-psi}.
\end{proof}

\subsection{Dynamical preliminaries}
\label{s:dynamics}

In this section, we review several properties of normally hyperbolic trapping,
assuming that $p,q,\Gamma_\pm,K$ satisfy properties~\eqref{a:1}--\eqref{a:5} listed
in the introduction. We start with the following restatement
of~\cite[Lemma~5.1]{nhp}, see also~\cite[Lemma~4.1]{wu-zw}.
To be able to quantize the functions $\varphi_\pm,c_\pm$,
we multiply them by a cutoff to obtain compactly supported functions,
but only require their properties to hold in a neighborhood $U$
of $K\cap \{p=0\}$.
\begin{lemm}
  \label{l:phi+}
Fix small $\varepsilon>0$. Then there exists a bounded neighborhood $U$ of \break $K\cap \{p=0\}$
and functions $\varphi_\pm\in C_0^\infty(T^*X)$ such that $\WFh(Q)\cap U=\emptyset$ and
\begin{enumerate}
\item for $\delta>0$ small enough, the set
\begin{equation}
  \label{e:U-delta}
U_\delta:=\{|\varphi_+|<\delta,\ |\varphi_-|<\delta,\ |p|<\delta\}\cap U
\end{equation}
is compactly contained in $U$;
\item $\Gamma_\pm\cap U=\{\varphi_\pm=0\}\cap U$;
\item $H_p\varphi_\pm=\mp c_\pm\varphi_\pm$ on $U$, where $c_\pm\in C_0^\infty(T^*X)$
satisfy
$$
0<\nu_{\min}-\varepsilon\leq c_\pm\leq \nu_{\max}+\varepsilon\quad\text{on }U;
$$
\item $\{\varphi_+,\varphi_-\}>0$ on $U$.
\end{enumerate}
\end{lemm}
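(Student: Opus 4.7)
\medskip\noindent\textbf{Proof plan.} My approach is to first build smooth defining functions $\tilde\varphi_\pm$ for $\Gamma_\pm$ in a neighborhood of $K\cap\{p=0\}$, then relate the naive coefficient $\tilde c_\pm$ in the relation $H_p\tilde\varphi_\pm=\mp\tilde c_\pm\tilde\varphi_\pm$ to the expansion rates $\nu_{\min},\nu_{\max}$, and finally sharpen $\tilde c_\pm$ to a function in the desired range by a multiplicative modification of $\tilde\varphi_\pm$. Assumption~\eqref{a:1} provides smooth orientable codimension-$1$ submanifolds $\Gamma_\pm$ with $K\cap\{p=0\}$ compact; orientability yields $\tilde\varphi_\pm\in C^\infty$ in an open neighborhood $V$ of $K\cap\{p=0\}$ with $\Gamma_\pm\cap V=\{\tilde\varphi_\pm=0\}$ and $d\tilde\varphi_\pm\neq 0$ there. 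Shrinking $V$ using assumption~(4) yields $\WFh(Q)\cap V=\emptyset$, and a cutoff makes $\tilde\varphi_\pm\in C_0^\infty(T^*X)$. Tangency of $H_p$ to $\Gamma_\pm$ (assumption~(3)) combined with Hadamard's lemma produces $\tilde c_\pm\in C^\infty$ with $H_p\tilde\varphi_\pm=\mp\tilde c_\pm\tilde\varphi_\pm$ on $V$.

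To relate $\tilde c_+$ to the transverse expansion rates, fix $\rho\in K\cap\{p=0\}$ and $v\in T_\rho\Gamma_-$. Differentiating the ODE $\partial_t\tilde\varphi_+(e^{tH_p}\gamma(s))=-\tilde c_+\tilde\varphi_+$ in $s$ at $s=0$ along a curve $\gamma(s)$ with $\gamma(0)=\rho$, $\dot\gamma(0)=v$, and using $\tilde\varphi_+(e^{tH_p}\rho)=0$, produces
$$
d\tilde\varphi_+\bigl(de^{tH_p}\cdot v\bigr)=\exp\Bigl(-\int_0^t\tilde c_+(e^{sH_p}\rho)\,ds\Bigr)\,d\tilde\varphi_+(v).
$$
Since $d\tilde\varphi_+$ descends to a nonvanishing linear functional on $T_\rho\Gamma_-/T_\rho K$, compactness of $K\cap\{p=0\}$ makes $|d\tilde\varphi_+(\cdot)|$ comparable to $|\pi(\cdot)|$; the definition of $\nu_{\min},\nu_{\max}$ then yields, uniformly in $\rho\in K\cap\{p=0\}$ and $t\ge 0$,
$$
(\nu_{\min}-\tfrac{\varepsilon}{2})t-\log C\;\le\;\int_0^t\tilde c_+(e^{sH_p}\rho)\,ds\;\le\;(\nu_{\max}+\tfrac{\varepsilon}{2})t+\log C.
$$

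The pointwise bound on $c_+$ is obtained by averaging. Setting $g_T(\rho):=-\int_0^T(1-s/T)\,\tilde c_+(e^{sH_p}\rho)\,ds$, integration by parts in $s$ gives $H_pg_T=\tilde c_+-\bar c_+^{(T)}$ where $\bar c_+^{(T)}(\rho):=T^{-1}\int_0^T\tilde c_+(e^{sH_p}\rho)\,ds$. Replacing $\tilde\varphi_+$ by $\varphi_+:=e^{g_T}\tilde\varphi_+$ preserves its zero set and changes the coefficient to $c_+=\bar c_+^{(T)}$, which by the previous display lies in $[\nu_{\min}-\varepsilon,\nu_{\max}+\varepsilon]$ on $K\cap\{p=0\}$ once $T$ is large; continuity extends the inclusion to a neighborhood $U$. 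The construction of $\varphi_-,c_-$ is analogous using the backward flow. Property~(4) then follows by linear symplectic algebra from assumption~(5): $T_K\Gamma_\pm$ are coisotropic hyperplanes with characteristic directions spanned by $H_{\varphi_\pm}$, so $(TK)^{\omega}=\mathrm{span}\{H_{\varphi_+},H_{\varphi_-}\}$, and symplecticity of $TK$ forces $\{\varphi_+,\varphi_-\}=\omega(H_{\varphi_+},H_{\varphi_-})\ne 0$ on $K$; an appropriate choice of orientations makes this positive, and shrinking $U$ preserves positivity. Property~(1) is then immediate: the continuous map $(\varphi_+,\varphi_-,p)\colon T^*X\to\mathbb R^3$ vanishes precisely on a compact set containing $K\cap\{p=0\}$, so $U_\delta\Subset U$ for all small $\delta$.

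The main technical subtlety is in the averaging step: one must first fix $T=T(\varepsilon)$ large, and only then shrink $V$ so that the forward orbit of every point of $V$ remains in the domain of $\tilde c_+$ for time $T$ (which is possible by continuity of the flow around the compact invariant set $K\cap\{p=0\}$); a cutoff is applied at the end to keep $\varphi_+$ compactly supported without disturbing the identities on the final smaller neighborhood $U$.
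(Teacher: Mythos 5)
The paper does not supply a proof of this lemma; it states it as a restatement of \cite[Lemma~5.1]{nhp} (see also \cite[Lemma~4.1]{wu-zw}), so the content is entirely inherited from the cited references. Your self-contained argument is correct and follows the same route as those references: construct raw defining functions via orientability and Hadamard's lemma, read off the relation between $\tilde c_+$ and the Lyapunov rates from the identity $d\tilde\varphi_+(de^{tH_p}v)=\exp\bigl(-\int_0^t\tilde c_+\circ e^{sH_p}\bigr)d\tilde\varphi_+(v)$ for $v\in T_\rho\Gamma_-$, $\rho\in K\cap\{p=0\}$, and then regularize by the finite-time average (conjugating $\tilde\varphi_+$ by $e^{g_T}$) to force $c_\pm$ into $[\nu_{\min}-\varepsilon,\nu_{\max}+\varepsilon]$; the nonvanishing of $\{\varphi_+,\varphi_-\}$ on $K$ by linear symplectic algebra (coisotropic hyperplanes with characteristics $H_{\varphi_\pm}$, symplectic $TK$) and the $U_\delta\Subset U$ compactness argument are likewise the standard ingredients. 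The only place requiring care, which you correctly flag, is the order of quantifiers in the averaging: fix $T=T(\varepsilon)$ before shrinking the neighborhood so that $\bar c_+^{(T)}$ is defined; with that in hand the argument is complete.
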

Note that in particular, for $\delta>0$ small enough and all $t\geq 0$,
\begin{equation}
  \label{e:U-delta-map}
e^{-tH_p}(U_\delta\cap \Gamma_+)
\subset \{|\varphi_-|<e^{-(\nu_{\min}-\varepsilon)t}\delta\}\cap U_\delta\cap \Gamma_+.
\end{equation}
Moreover, the map
\begin{equation}
  \label{e:funny-coordinates}
(\rho,s)\mapsto e^{sH_{\varphi_+}}(\rho),\quad
\rho\in K,\ s\in \mathbb R
\end{equation}
is a diffeomorphism from some neighborhood of $(K\cap \{p=0\})\times\{s=0\}$
in $K\times\mathbb R$ onto some neighborhood of $K\cap \{p=0\}$
in $\Gamma_+$.

For Theorem~\ref{t:gap2},
we also need existence of solutions to the transport equation on $\Gamma_+$:
\begin{lemm}
  \label{l:transport}
Assume that the flow is $r$-normally hyperbolic in the sense of~\eqref{e:r-nh}, for
all $r$. Fix small $\delta>0$ and take $U_\delta$ defined by~\eqref{e:U-delta}.
Then for each $f\in C^\infty(\Gamma_+\cap U_\delta)$, there exists
unique $u\in C^\infty(\Gamma_+\cap U_\delta)$ such that
\begin{equation}
  \label{e:transport}
(H_p+c_+) u=f.
\end{equation}
\end{lemm}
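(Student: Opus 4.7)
The plan is to write $u$ by integrating $f$ along the $H_p$-flow on $\Gamma_+$, using that the backward flow contracts $\Gamma_+\cap U_\delta$ onto $K\cap\{p=0\}$: by \eqref{e:U-delta-map}, $e^{-tH_p}(\Gamma_+\cap U_\delta)\subset\Gamma_+\cap U_\delta$ for all $t\geq 0$, and $c_+\geq\nu_{\min}-\varepsilon>0$. Setting
$$
\Phi(t,\rho):=\int_0^t c_+(e^{-sH_p}\rho)\,ds,\qquad
u(\rho):=\int_0^\infty e^{-\Phi(t,\rho)}f(e^{-tH_p}\rho)\,dt,
$$
the integrand decays at least like $e^{-(\nu_{\min}-\varepsilon)t}\|f\|_\infty$ on the precompact set $\overline{U_\delta}$, so the integral converges absolutely. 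A direct computation along the flow (write $H_p u(\rho)=\tfrac{d}{dt}|_{t=0}u(e^{tH_p}\rho)$ and change variables $t\mapsto t-s$) yields $(H_p+c_+)u=f$.

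Uniqueness is a one-line ODE argument. If $w\in C^\infty(\Gamma_+\cap U_\delta)$ solves $(H_p+c_+)w=0$, then along $\rho(t):=e^{-tH_p}\rho_0$ one has $\tfrac{d}{dt}w(\rho(t))=c_+(\rho(t))\,w(\rho(t))$, hence $w(\rho(t))=e^{\Phi(t,\rho_0)}w(\rho_0)$. Since $\rho(t)$ remains in the precompact set $\overline{U_\delta}$ for $t\geq 0$ while $\Phi(t,\rho_0)\geq(\nu_{\min}-\varepsilon)t\to\infty$, boundedness of $w$ forces $w(\rho_0)=0$.

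The main obstacle is smoothness, and it is precisely here that the $r$-normal hyperbolicity hypothesis is used. Differentiating the integrand in $\rho$ produces factors of $de^{-tH_p}$ acting on $T_\rho\Gamma_+$. By assumption \eqref{a:5} and the definition of $\mu_{\max}$, this differential contracts transversal-to-$K$ directions like $e^{-(\nu_{\min}-\varepsilon)t}$ and expands along $TK$ at most like $e^{(\mu_{\max}+\varepsilon)t}$; the dangerous contribution comes from the latter. Consequently any $r$-th order derivative of $f(e^{-tH_p}\rho)$, and similarly derivatives of $\Phi(t,\rho)$ and $e^{-\Phi(t,\rho)}$, are bounded by $C_re^{r(\mu_{\max}+\varepsilon)t}$. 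Multiplied by the decay $e^{-\Phi}\leq e^{-(\nu_{\min}-\varepsilon)t}$, the $r$-th derivative of the integrand is bounded by $C_r'e^{-(\nu_{\min}-r\mu_{\max}-O(\varepsilon))t}$, which is integrable exactly when $r\mu_{\max}<\nu_{\min}$, i.e.\ \eqref{e:r-nh} at level $r$. Since this is assumed for all $r$, differentiation under the integral sign is justified to all orders and $u\in C^\infty(\Gamma_+\cap U_\delta)$. The bookkeeping for higher-order derivatives can be organized by Faà di Bruno or, cleaner, by first straightening the flow using the coordinates \eqref{e:funny-coordinates} so that $H_p$ acts only through translation on the fibres of a tubular neighborhood of $K\cap\{p=0\}$ in $\Gamma_+$; in these coordinates the expansion bounds on $de^{-tH_p}|_{TK}$ become the only nontrivial input.
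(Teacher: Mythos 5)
Your proposal is correct, but it proceeds by a genuinely different route than the paper. The paper makes the argument almost immediate by citing \cite[Lemma~5.2]{nhp}, which asserts existence and uniqueness of smooth $v$ with $v|_K=0$ solving $H_pv=g$ whenever $g|_K=0$; the transport equation $(H_p+c_+)u=f$ is then converted to this form via an integrating factor $F=e^{G}\{\varphi_+,\varphi_-\}^{-1}\varphi_-$ vanishing on $K$, chosen so that $H_pF=c_+F$ (with $G$ itself produced by the cited lemma). You instead build $u$ by hand as $\int_0^\infty e^{-\Phi(t,\rho)}f(e^{-tH_p}\rho)\,dt$, verify the transport identity directly, and control smoothness by differentiating under the integral sign: the key point, correctly identified, is that the $k$-th derivative of $e^{-tH_p}|_{\Gamma_+}$ (and hence of $\Phi$, $e^{-\Phi}$, and $f\circ e^{-tH_p}$) can grow at most like $e^{k(\mu_{\max}+\varepsilon)t}$, while the factor $e^{-\Phi}\leq e^{-(\nu_{\min}-\varepsilon)t}$ beats this precisely under \eqref{e:r-nh}. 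This is essentially the content buried inside the cited lemma, and your version has the virtue of making explicit both where $r$-normal hyperbolicity enters and why its strength (for all $r$, hence $\mu_{\max}=0$) is what allows all orders of smoothness. A few points worth tightening if you wanted a fully rigorous write-up: the claim that $k$-th derivatives of the flow grow like $e^{k(\mu_{\max}+\varepsilon)t}$ requires the standard Duhamel/variational-equation argument (not just the definition of $\mu_{\max}$, which controls only the first derivative); the two $\varepsilon$'s (the one fixed in Lemma~\ref{l:phi+} bounding $c_+$ from below, and the one in the growth bound on $de^{tH_p}|_{TK}$) should be kept separate, choosing the latter small depending on $r$; and the parenthetical remark that straightening via \eqref{e:funny-coordinates} makes $H_p$ act by translation in the fiber is not literally true (the $H_p$-flow mixes the $K$ and fiber coordinates), though the coordinates do help organize the estimate. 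None of this affects the soundness of the approach.
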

\begin{proof}
We use~\cite[Lemma~5.2]{nhp}, which gives existence and uniqueness of
$v\in C^\infty(\Gamma_+\cap U_\delta)$
such that $H_p v=g$ and $v|_K=0$, for each $g\in C^\infty(\Gamma_+\cap U_\delta)$
such that $g|_K=0$.

We claim that there exists a function $G\in C^\infty(\Gamma_+\cap U_\delta)$
such that
$$
H_p F = c_+F,\quad
F:=e^G \{\varphi_+,\varphi_-\}^{-1}\varphi_-.
$$
Indeed, $G$ needs to solve the equation
$$
H_pG = c_+-c_-+{H_p\{\varphi_+,\varphi_-\}\over \{\varphi_+,\varphi_-\}},
$$
and we use~\cite[Lemma~5.2]{nhp} since the right-hand side of this equation vanishes on $K$.

Now~\eqref{e:transport} becomes
$$
H_p (Fu)=Ff,
$$
and it remains to invoke~\cite[Lemma~5.2]{nhp} once again.
\end{proof}

\subsection{Basic estimates}

We now derive some estimates for the operator $P-iQ-\lambda$, where
$P,Q$ satisfy the assumptions of Theorem~\ref{t:gap1}.
We start by using the complex absorbing operator $Q$ to reduce the analysis
to a neighborhood of $K\cap \{p=0\}$, where $K$ is the trapped set: 
\begin{lemm}
  \label{l:concentrate}
Assume that $h_j\to 0$, $u=u(h_j)\in L^2(X)$ is bounded uniformly in $h$, and
$$
\|(P-iQ-\lambda)u\|_{L^2}=\mathcal O(h^{\alpha+1})
$$
for some constant $\alpha> 0$ and $\lambda=\mathcal O(h)$. Then:

1. $u=\mathcal O(h^\alpha)$ microlocally on the complement of $\{p=0\}\cap \Gamma_+$.

2. If $u$ converges to some measure $\mu$ in the sense of Definition~\ref{d:measures},
then $\mu$ is supported on $\{p=0\}\cap \Gamma_+$.
If moreover $\|u\|_{L^2}\geq c>0$ for some constant $c$,
then for each neighborhood $U$
of $\{p=0\}\cap K$, we have $\mu(U)>0$.

Same statements are true if we replace $\mathcal O(h^{\alpha+1}),\mathcal O(h^\alpha)$ by $o(h^{\alpha+1}),o(h^\alpha)$
and $\alpha\geq 0$.
\end{lemm}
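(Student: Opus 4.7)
The statement has two parts: a microlocal bound away from $\{p=0\}\cap\Gamma_+$ (Part~1), and a support/lower-bound statement for the defect measure $\mu$ (Part~2). My plan is to obtain Part~1 by combining the semiclassical elliptic parametrix with propagation of singularities in the presence of absorption, and to deduce Part~2 from Part~1 together with a dynamical transport argument for $\mu$ on $\Gamma_+$.

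For Part~1, the principal symbol $\sigma(P-iQ-\lambda)=p-iq$ (recall $\lambda=O(h)$) is elliptic on the complement of $\{p=0\}\cap\{q=0\}$, so the elliptic parametrix yields $\|Au\|_{L^2}=O(h^{\alpha+1})$, hence $O(h^\alpha)$, for every $A\in\Psi^{\comp}_h(X)$ microlocalized there. For $\rho\in\{p=0\}\setminus\Gamma_+$, assumption~(2) supplies $t\geq 0$ with $e^{-tH_p}(\rho)\in\{q>0\}$; since $q\geq 0$, the standard semiclassical propagation-of-singularities estimate with absorption propagates the elliptic bound forward along $H_p$ from $e^{-tH_p}(\rho)$ to $\rho$ at the cost of one power of $h$, giving $\|Au\|_{L^2}\leq Ch^{-1}\|(P-iQ-\lambda)u\|_{L^2}+O(h^\infty)\|u\|_{L^2}=O(h^\alpha)$ at $\rho$. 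A compactness argument covers all of $\{p=0\}\setminus\Gamma_+$; the $o$-version is identical.

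For Part~2, the containment $\supp\mu\subset\{p=0\}\cap\Gamma_+$ is immediate from Part~1, since $u=o(1)$ microlocally on the open complement. For the lower bound, ellipticity of $P-iQ-\lambda$ for large $|\xi|$ (using that $q>0$ there) forces $u$ to be microlocally concentrated on a fixed compact set, so $\|u\|^2\to\mu(T^*X)\geq c^2$; and Lemma~\ref{l:measure-elliptic} refines the support to $\{p=q=0\}$, placing $\supp\mu$ inside the compact set $\Gamma_+\cap\{p=0\}\cap\{q=0\}$ by assumption~\eqref{a:1}. Passing to a subsequence with $\Im\lambda/h\to\beta$ and applying Lemma~\ref{l:typical} to test functions $a$ supported off $\WFh(Q)$ (where effectively $P-iQ-\lambda=P-\lambda$ microlocally, and one may take $W=-i\lambda/h$), one obtains the weak transport equation $(H_p-2\beta)\mu=0$ on a neighborhood $V$ of $K\cap\{p=0\}$ (which exists by assumption~(4)), equivalently $e^{tH_p}_*\mu=e^{2\beta t}\mu$ on $V$. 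Since $\Gamma_+$ is the unstable manifold of $K$ by assumption~\eqref{a:5}, backward flow sends the compact set $\supp\mu$ uniformly into any prescribed neighborhood $U\subset V$ of $K\cap\{p=0\}$ in finite time $T$, and the transport identity then gives $\mu(U)\geq\mu(e^{-TH_p}(\supp\mu))=e^{2\beta T}\mu(\supp\mu)>0$.

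The most delicate point I anticipate is applying the transport identity over the full time interval $[0,T]$: one needs the intermediate flow $e^{-sH_p}(\supp\mu)$ to lie in the open region where $(H_p-2\beta)\mu=0$ has been established. This should follow from the $H_p$-invariance of $\supp\mu$ under the scaling (which keeps the support inside $\{q=0\}$) together with assumption~(4), but may require covering the flow tube by a finite chain of neighborhoods disjoint from $\WFh(Q)$ and iterating the transport step by step.
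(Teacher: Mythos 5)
Your Part~1 and the first statement of Part~2 coincide with the paper's argument: ellipticity of $p-iq$ off $\{p=q=0\}$ plus propagation of singularities with absorption (using that every backward $H_p$-trajectory from $\{p=0\}\setminus\Gamma_+$ reaches $\{q>0\}$) give $\|Au\|_{L^2}\leq Ch^{-1}\|(P-iQ-\lambda)u\|_{L^2}+\mathcal O(h^\infty)\|u\|_{L^2}$ for $\WFh(A)\cap\{p=0\}\cap\Gamma_+=\emptyset$, and the support statement for $\mu$ follows. That part is fine.

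The lower bound $\mu(U)>0$ is where you diverge, and the transport route as written has a genuine gap. The weak transport equation $(H_p-2\beta)\mu=0$ that you obtain from Lemma~\ref{l:typical} is established only against test functions supported off $\WFh(Q)$. But $\supp\mu\subset\Gamma_+\cap\{p=0\}\cap\{q=0\}$ need \emph{not} be disjoint from $\WFh(Q)$: assumption~(4) only asserts $\WFh(Q)\cap K\cap\{|p|\leq\delta\}=\emptyset$, while in general $\overline{\{q>0\}}\subset\WFh(Q)$, so $\WFh(Q)$ typically contains part of $\{q=0\}$, exactly where $\mu$ may carry mass. Consequently the pushforward identity $\mu(e^{-TH_p}(\supp\mu))=e^{2\beta T}\mu(\supp\mu)$ is not available --- not primarily for the reason you flag (the intermediate flow leaving a chain of good neighborhoods), but because the starting set itself may lie where no transport identity has been proved; and your proposed remedy, that $\supp\mu$ stays inside $\{q=0\}$, is strictly weaker than staying off $\WFh(Q)$. (Feeding $W=h^{-1}Q$ into Lemma~\ref{l:typical} does not help either: $h^{-1}Q\notin\Psi^0_h$, and the subprincipal part of $AQ+QA$ does not vanish on $\{q=0\}$.) You also use that backward flow carries all of $\supp\mu$ into $U$ in a uniform finite time, a stronger dynamical claim than what the assumptions directly furnish. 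The paper avoids all of this with a direct operator estimate: $\|u\|_{L^2}\leq Ch^{-1}\|(P-iQ-\lambda)u\|_{L^2}+\|Bu\|_{L^2}$ for any $B\in\Psi^{\comp}_h$ with $\sigma(B)\neq0$ on $K\cap\{p=0\}$, obtained by propagation of singularities using that every backward $H_p$-trajectory reaches $\{p-iq\neq0\}\cup\{\sigma(B)\neq0\}$ (see \cite[Lemma~4.1]{nhp}); taking $\WFh(B)\subset U$ with $\mu(U)=0$ gives $\|Bu\|_{L^2}=o(1)$, hence $\|u\|_{L^2}=o(1)$, contradicting $\|u\|_{L^2}\geq c$. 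Replacing your transport argument with this estimate closes the gap.
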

\begin{proof}
Take $A\in\Psi^0_h(X)$ such that $\WFh(A)\cap \{p=0\}\cap \Gamma_+=\emptyset$.
Then
\begin{equation}
  \label{e:propest}
\|Au\|_{L^2}\leq Ch^{-1}\|(P-iQ-\lambda)u\|_{L^2}+\mathcal O(h^\infty)\|u\|_{L^2}.
\end{equation}
This follows from the elliptic estimate~\cite[Proposition~2.4]{zeta} and
propagation of singularities with a complex absorbing operator~\cite[Proposition~2.5]{zeta},
since for each $(x,\xi)\in\WFh(A)$, there exists $t\geq 0$
such that $e^{-tH_p}(x,\xi)\in \{p-iq\neq 0\}$.

The estimate~\eqref{e:propest} immediately implies part~1 of the lemma,
as well as the first statement of part~2. To see the second statement of part~2, it
suffices to use the following estimate, valid for each
$B\in\Psi^{\comp}_h(X)$ such that $\sigma(B)\neq 0$ on $\{p=0\}\cap K$:
\begin{equation}
  \label{e:propest2}
\|u\|_{L^2}\leq Ch^{-1}\|(P-iQ-\lambda)u\|_{L^2}+\|Bu\|_{L^2}.
\end{equation}
To prove~\eqref{e:propest2}, we again use the elliptic estimate and propagation
of singularities, noting that for each $(x,\xi)\in T^*X$, there exists $t\geq 0$
that $e^{-tH_p}(x,\xi)\in \{p-iq\neq 0\}\cup \{\sigma(B)\neq 0\}$,
see~\cite[Lemma~4.1]{nhp}.
\end{proof}
The next lemma is a generalization of the statement that there are no resonances
in the upper half-plane (which, keeping in mind Lemma~\ref{l:concentrate},
is the special case with $W=0$ and $\Im\lambda\geq ch$).
See~\cite[\S8.2]{nhp} for a proof using positive commutator estimates directly
instead of going through semiclassical measures.
One could also replace $\Re\sigma(W)$ in~\eqref{e:cim} by its
finite time avarage average along the flow of $H_p$ on $K\cap \{p=0\}$,
see for instance~\cite[Theorem~3.2]{royer}.
\begin{lemm}
  \label{l:noupper}
Fix small $\delta>0$; we use the set $U_\delta$ defined in~\eqref{e:U-delta}.
Assume that $A,B,B_1\in\Psi^{\comp}_h(X)$ satisfy (see Figure~\ref{f:lil})
\begin{itemize}
\item $\WFh(A)\subset U_{3\delta/2}$ and
$A=1+\mathcal O(h^\infty)$ microlocally on $\overline{U_{\delta}}$;
\item $\WFh(B)\subset U_{3\delta}\cap \{|\varphi_+|>\delta/2\}$ and
$B=1+\mathcal O(h^\infty)$ microlocally on\break $\overline{U_{2\delta}}\cap \{|\varphi_+|\geq\delta\}$;
\item $\WFh(B_1)\subset U_{3\delta}$
and $B_1=1+\mathcal O(h^\infty)$ microlocally on $\overline{U_{2\delta}}$.
\end{itemize}
Take $W\in\Psi^{\comp}_h(X)$ and $\lambda=\mathcal O(h)$ such that
\begin{equation}
  \label{e:cim}
\Re \sigma(W)+h^{-1}\Im\lambda\geq c>0\quad\text{on }U_{3\delta}
\end{equation}
for some constant $c$. Then for each $u\in L^2(X)$,
$$
\|Au\|_{L^2}\leq Ch^{-1}\|B_1(P-ihW-\lambda)u\|_{L^2}+C\|Bu\|_{L^2}+\mathcal O(h^\infty)\|u\|_{L^2}.
$$
\end{lemm}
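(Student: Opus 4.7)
The plan is to argue by contradiction using semiclassical defect measures. Suppose the estimate fails. Then there exist $h_j\to 0$ and $u=u(h_j)\in L^2(X)$ with $\|u\|_{L^2}=1$, $\|Au\|_{L^2}\geq c_0>0$ along a subsequence, $\|B_1(P-ihW-\lambda)u\|_{L^2}=o(h)$, and $\|Bu\|_{L^2}=o(1)$. After passing to a further subsequence, $u$ converges in the sense of Definition~\ref{d:measures} to a nonnegative measure $\mu$ with $\int_{T^*X}|\sigma(A)|^2\,d\mu\geq c_0^2>0$. To absorb the spectral parameter into the $W$-term, I would rewrite
$$P-ihW-\lambda=P-ih\widetilde W,\qquad \widetilde W:=W-i\lambda h^{-1}\in\Psi^0_h(X),$$
so that hypothesis~\eqref{e:cim} becomes $\Re\sigma(\widetilde W)=\Re\sigma(W)+h^{-1}\Im\lambda\geq c>0$ on $U_{3\delta}$.

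First I would pin down the support of $\mu$. The microlocal equation $(P-ih\widetilde W)u=o(h)$ holds on $U_{2\delta}$ (where $B_1=1+\mathcal O(h^\infty)$), and the principal symbol is $p$; by Lemma~\ref{l:measure-elliptic}, $\mu(U_{2\delta}\cap\{p\neq 0\})=0$. The hypothesis on $B$ together with $\|Bu\|\to 0$ yields $\mu(\overline{U_{2\delta}}\cap\{|\varphi_+|\geq\delta\})=0$. Finally, since $\WFh(A)\subset U_{3\delta/2}$, the normalization $\int|\sigma(A)|^2\,d\mu>0$ forces $\mu(U_{3\delta/2})>0$.

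Next I would apply Lemma~\ref{l:typical} to the operator $P-ih\widetilde W$: for each $a\in C_0^\infty(U_{2\delta})$,
$$\int_{T^*X}\bigl(H_pa-2(\Re\sigma(W)+h^{-1}\Im\lambda)\,a\bigr)\,d\mu=0.$$
For the test function, pick $\chi\in C_0^\infty(\RR)$ with $\chi\geq 0$, $\chi\equiv 1$ on $[-3\delta/2,3\delta/2]$, $\supp\chi\subset(-2\delta,2\delta)$, and $s\chi'(s)\leq 0$; and $\eta\in C_0^\infty(U)$ with $\eta\equiv 1$ on $\overline{U_{2\delta}}$. Set
$$a:=\chi(\varphi_+)\,\chi(\varphi_-)\,\chi(p)\,\eta\in C_0^\infty(U_{2\delta}).$$
Then $a\geq 0$ and $a\equiv 1$ on $U_{3\delta/2}$, so $\int a\,d\mu\geq\mu(U_{3\delta/2})>0$. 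Using $H_p\varphi_\pm=\mp c_\pm\varphi_\pm$ and $H_pp=0$, on $\supp a$,
$$H_pa=-c_+\varphi_+\chi'(\varphi_+)\chi(\varphi_-)\chi(p)+c_-\varphi_-\chi(\varphi_+)\chi'(\varphi_-)\chi(p).$$
On $\supp\mu\cap U_{2\delta}$, the previous paragraph gives $|\varphi_+|<\delta<3\delta/2$, so $\chi'(\varphi_+)=0$ and the first term vanishes. The second is nonpositive because $c_->0$, $\chi\geq 0$, and $s\chi'(s)\leq 0$. Hence $\int H_pa\,d\mu\leq 0$, while the propagation identity yields $\int H_pa\,d\mu\geq 2c\int a\,d\mu>0$, a contradiction.

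The main obstacle is the sign mismatch in the commutator: $H_p\varphi_+=-c_+\varphi_+$ produces a wrong-sign contribution to $H_pa$. The geometric observation that rescues the argument is that this bad term is confined to the annular region $\{|\varphi_+|\in[3\delta/2,2\delta]\}$ where $\chi'(\varphi_+)\neq 0$, which is precisely where the $B$-hypothesis annihilates $\mu$; the surviving $\varphi_-$ term has the correct sign because $H_p\varphi_-=+c_-\varphi_-$. The hierarchy $U_\delta\subsetneq U_{3\delta/2}\subsetneq U_{2\delta}\subsetneq U_{3\delta}$ built into the hypotheses is exactly what allows a single bump $\chi$ with flat region of length $3\delta$ to accomplish both tasks simultaneously.
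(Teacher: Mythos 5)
There is a genuine gap at the very first step: the negation of the stated estimate does not yield a sequence with both $\|u\|_{L^2}=1$ and $\|Au\|_{L^2}\ge c_0>0$. Normalizing $\|u\|_{L^2}=1$, failure of the estimate only gives, for each $C$, some $h$ and $u$ with $\|Au\|_{L^2}> Ch^{-1}\|B_1(P-ihW-\lambda)u\|_{L^2}+C\|Bu\|_{L^2}+Ch^N$; as $C\to\infty$ with $h\to 0$ the quantity $Ch^N$ may tend to $0$, so the lower bound on $\|Au\|$ degenerates. Equivalently, if you normalize $\|Au\|=1$, then $\|u\|$ can blow up polynomially in $h^{-1}$ and the defect-measure machinery no longer applies. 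This is exactly the difficulty the paper's proof is designed to circumvent: it first establishes the weaker estimate~\eqref{e:fuzzy1}, whose error term $\varepsilon_0\|u\|$ \emph{does} force $\|Au\|>\varepsilon_0$ in a contradiction sequence with $\|u\|=1$, and only afterwards upgrades $\varepsilon_0\|u\|$ to $\mathcal O(h^\infty)\|u\|$ via the propagation estimate~\eqref{e:fuzzy3} and absorption. Your argument needs this bootstrap (or some equivalent a~priori microlocal cutoff of $u$ to a neighborhood of $U_{2\delta}$ controlled by $\|Au\|$, $\|Bu\|$, and the right-hand side) before the measure-theoretic part can start.

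Once the normalization is repaired, the core of your measure argument is correct and takes a route genuinely different from the paper's. The paper first uses propagation of singularities to show $\mu(U_{3\delta/2}\setminus\Gamma_+)=0$, and then derives a contradiction from the exponential growth of $\int_{\Gamma_+}a\circ e^{tH_p}\,d\mu$ as $t\to+\infty$. You instead run a one-shot positive commutator with the explicit test function $a=\chi(\varphi_+)\chi(\varphi_-)\chi(p)\eta$: the hypothesis on $B$ kills the wrong-sign term involving $\chi'(\varphi_+)$ on $\supp\mu$, the $\varphi_-$ term has the good sign since $H_p\varphi_-=c_-\varphi_-$ and $s\chi'(s)\le 0$, and $\chi'(p)$ drops out since $H_pp=0$; meanwhile \eqref{e:cim} forces $\int H_pa\,d\mu\ge 2c\int a\,d\mu>0$, a contradiction. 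This avoids the propagation-of-singularities step entirely in this part of the proof (you only need ellipticity and the $B$-hypothesis to localize $\mu$), which is a small economy. The two approaches buy the same conclusion; yours is arguably more self-contained at the measure level, while the paper's exponential-growth formulation is what generalizes to the band estimates of Lemma~\ref{l:lab2}.

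Minor points worth tightening if you write this up: you should explicitly pass to a subsequence with $h^{-1}\lambda\to\omega$ before invoking Lemma~\ref{l:typical}, so that the symbol $\Re\sigma(W)+h^{-1}\Im\lambda$ has a well-defined limit; and when computing $H_pa$ you should note that the $H_p\eta$ contribution vanishes on $\supp\bigl(\chi(\varphi_+)\chi(\varphi_-)\chi(p)\bigr)$ because $\eta\equiv 1$ on $\overline{U_{2\delta}}$ and $\eta$ is compactly supported in $U$.
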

\begin{figure}
\includegraphics{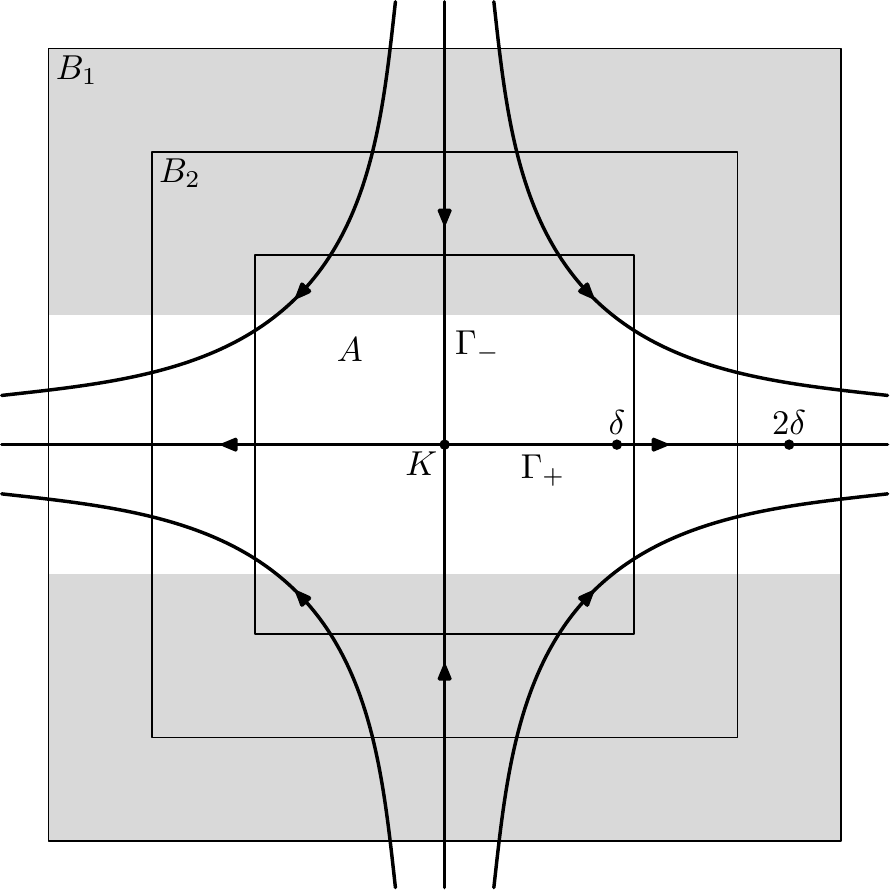}
\caption{The flow $H_p$ and the wavefront sets of $A,B,B_1,B_2$; here $\WFh(B)$ is the
shaded portion of $\WFh(B_1)$. The horizontal
axis is $\varphi_-$ and the vertical axis is $\varphi_+$.}
\label{f:lil}
\end{figure}
\begin{proof}
Take $B_2\in\Psi^{\comp}_h(X)$ such that
$\WFh(B_2)\subset U_{2\delta}$ and
$B_2=1+\mathcal O(h^\infty)$ microlocally on $\overline{U_{3\delta/2}}$. We first claim
that for each fixed $\varepsilon_0>0$, there exists a constant $C$
such that for $h$ small enough,
\begin{equation}
  \label{e:fuzzy1}
\|Au\|_{L^2}\leq Ch^{-1}\|B_2(P-ihW-\lambda)u\|_{L^2}+C\|Bu\|_{L^2}
+\varepsilon_0\|u\|_{L^2}.
\end{equation}
We argue by contradiction. If~\eqref{e:fuzzy1} does not hold, then
there exist sequences $h_j\to 0$, $\lambda=\lambda(h_j)$,
and $u=u(h_j)\in L^2$ such that $\|u\|_{L^2}\leq 1$,
$(P-ihW-\lambda)u=o(h)$ microlocally in $U_{3\delta/2}$, and
$\|Bu\|_{L^2}=o(1)$, yet $\|Au\|_{L^2}$ is separated
away from zero.

By passing to a subsequence, we may assume
that $u$ converges to some measure $\mu$ in the sense of Definition~\ref{d:measures}.
Since for each $(x,\xi)\in U_{3\delta/2}\setminus\Gamma_+$, there exists
$t\geq 0$ such that $e^{-tH_p}(x,\xi)\in \{\sigma(B)=1\}$
and $e^{-sH_p}(x,\xi)\in U_{3\delta/2}$ for $s\in [0,t]$, by propagation
of singularities~\cite[Proposition~2.5]{zeta} we see that
$u=o(1)$ microlocally in $U_{3\delta/2}\setminus \Gamma_+$ and thus 
\begin{equation}
  \label{e:supp}
\mu\big(U_{3\delta/2}\setminus\Gamma_+\big)=0.
\end{equation}
Passing to a subsequence, we may assume that $h^{-1}\lambda\to \omega\in\mathbb C$.
By Lemma~\ref{l:typical},
for each $a\in C_0^\infty(U_{3\delta/2})$,
$$
\int_{T^*X} H_pa\,d\mu=2\int_{T^*X} (\Re\sigma(W)+\Im\omega)a\,d\mu.
$$
By~\eqref{e:U-delta-map}, \eqref{e:supp}, and~\eqref{e:cim}, it follows
that for each nonnegative $a\in C_0^\infty(U_{3\delta/2})$,
$$
\int_{\Gamma_+\cap U_{3\delta/2}}a\circ e^{tH_p}\,d\mu\geq e^{2ct}\int_{\Gamma_+\cap U_{3\delta/2}} a\,d\mu,\quad t\geq 0.
$$
However, the left-hand side is bounded uniformly as $t\to +\infty$ (by $\|a\|_{\infty}\cdot\mu(U_{3\delta/2})$),
therefore $\mu(U_{3\delta/2})=0$ and thus $\|Au\|_{L^2}=o(1)$, giving a contradiction
and proving~\eqref{e:fuzzy1}.

Take $B'_2\in\Psi^{\comp}_h(X)$ such that
$B'_2=1+\mathcal O(h^\infty)$ microlocally on $\WFh(B_2)$.
We apply~\eqref{e:fuzzy1}
to $B'_2u$ to get for each $\varepsilon_0>0$,
\begin{equation}
  \label{e:fuzzy2}
\|Au\|_{L^2}\leq Ch^{-1}\|B_2(P-ihW-\lambda)u\|_{L^2}+C\|Bu\|_{L^2}+\varepsilon_0 \|B'_2u\|_{L^2}+\mathcal O(h^\infty)\|u\|_{L^2}.
\end{equation}
Now, for a correct choice of $B'_2$, for each $(x,\xi)\in \WFh(B'_2)\cap \{p=0\}$, there exists
$t\geq 0$ such that $e^{-tH_p}(x,\xi)\in \{\sigma(A)\neq 0\}\cup \{\sigma(B)\neq 0\}$
and $e^{-sH_p}(x,\xi)\in \{\sigma(B_1)\neq 0\}$ for $s\in [0,t]$. By propagation of singularities,
we then have
\begin{equation}
  \label{e:fuzzy3}
\|B'_2u\|_{L^2}\leq Ch^{-1}\|B_1(P-ihW-\lambda)u\|_{L^2}
+C\|Bu\|_{L^2}+C\|Au\|_{L^2}+\mathcal O(h^\infty)\|u\|_{L^2}.
\end{equation}
It remains to combine~\eqref{e:fuzzy2} with~\eqref{e:fuzzy3} and take $\varepsilon_0$ small enough.
\end{proof}

\section{First gap}
\label{s:gap1}

In this section, we prove Theorem~\ref{t:gap1}. We argue by contradiction.
Assume that the estimate~\eqref{e:main} does not hold; then (replacing $\varepsilon$
by $2\varepsilon$)
there exist sequences $h_j\to 0$, $\lambda=\lambda(h_j)$, and $u=u(h_j)\in L^2(X)$ such that
$$
\|u\|_{L^2}=1,\quad
\|(P-iQ-\lambda)u\|_{L^2}=\mathcal O(h^2),\quad
|\lambda|=\mathcal O(h),\quad
\Im\lambda>-(\nu_{\min}-2\varepsilon)h/2.
$$
By Lemma~\ref{l:concentrate}, we have
\begin{equation}
  \label{e:fg-4}
u=\mathcal O(h)\quad\text{microlocally on }T^*X\setminus\Gamma_+.
\end{equation}
Take small $\delta>0$ and let $U_\delta\subset T^*X$ be defined in~\eqref{e:U-delta}.
Recall that $Q=\mathcal O(h^\infty)$ microlocally on $U_{3\delta}$; therefore,
\begin{equation}
  \label{e:fg-1}
(P-\lambda)u=\mathcal O(h^2)\quad\text{microlocally on }U_{3\delta}.
\end{equation}
Take $\Theta_+,W_+\in\Psi^{\comp}_h(X)$ such that
$$
\sigma(\Theta_+)=\varphi_+,\quad
\sigma(W_+)=c_+,\quad
\Theta_+^*=\Theta_+,
$$
where $\varphi_+,c_+$ are constructed in Lemma~\ref{l:phi+}.
\begin{lemm}
\label{l:theta-killed}
For
$\Im\lambda>-(\nu_{\min}-2\varepsilon)h$ (which is weaker
than the assumption of Theorem~\ref{t:gap1}),
\begin{equation}
  \label{e:fg-5}
\Theta_+u=\mathcal O(h)\quad\text{microlocally on }U_\delta.
\end{equation}
\end{lemm}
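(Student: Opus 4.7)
The plan is to commute $\Theta_+$ past $P-\lambda$, note that the commutator produces exactly an $ihW_+$ term that shifts the spectral parameter into the effective upper half plane, and then apply Lemma~\ref{l:noupper} to $\Theta_+ u$, with the bad boundary term controlled via~\eqref{e:fg-4}.

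\textbf{Step 1: compute the commutator.} Since $\sigma(h^{-1}[P,\Theta_+])=-i\{p,\varphi_+\}=-iH_p\varphi_+=ic_+\varphi_+$, and $W_+\Theta_+$ has principal symbol $c_+\varphi_+$, the standard composition and symbol formulas give
\[
[P,\Theta_+]=ihW_+\Theta_++\mathcal O(h^2)
\]
in $\Psi^{\comp}_h(X)$. Rearranging,
\[
(P-ihW_+-\lambda)(\Theta_+u)=\Theta_+(P-\lambda)u+\mathcal O(h^2)\|u\|_{L^2}.
\]
By~\eqref{e:fg-1}, the right-hand side is $\mathcal O(h^2)$ microlocally on $U_{3\delta}$.

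\textbf{Step 2: verify the hypothesis of Lemma~\ref{l:noupper}.} Take $W:=W_+$. On $U_{3\delta}$ (for $\delta$ small enough) we have $\Re\sigma(W_+)=c_+\ge\nu_{\min}-\varepsilon$ by Lemma~\ref{l:phi+}, while the assumption $\Im\lambda>-(\nu_{\min}-2\varepsilon)h$ gives $h^{-1}\Im\lambda>-(\nu_{\min}-2\varepsilon)$. Hence
\[
\Re\sigma(W_+)+h^{-1}\Im\lambda>(\nu_{\min}-\varepsilon)-(\nu_{\min}-2\varepsilon)=\varepsilon>0,
\]
so~\eqref{e:cim} holds with $c=\varepsilon$.

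\textbf{Step 3: apply Lemma~\ref{l:noupper} to $\Theta_+u$.} Choose $A,B,B_1$ as in that lemma. Then
\[
\|A\Theta_+u\|_{L^2}\le Ch^{-1}\|B_1(P-ihW_+-\lambda)\Theta_+u\|_{L^2}+C\|B\Theta_+u\|_{L^2}+\mathcal O(h^\infty)\|\Theta_+u\|_{L^2}.
\]
The first term is $\mathcal O(h)$ by Step~1. For the middle term, $\WFh(B)\subset\{|\varphi_+|>\delta/2\}$, which is disjoint from $\Gamma_+=\{\varphi_+=0\}$; hence by~\eqref{e:fg-4}, $u=\mathcal O(h)$ microlocally on $\WFh(B)$, and since $\Theta_+$ is bounded on $L^2$, $\|B\Theta_+u\|_{L^2}=\mathcal O(h)$. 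Since $A$ equals the identity microlocally on $\overline{U_\delta}$, we conclude
\[
\Theta_+u=\mathcal O(h)\quad\text{microlocally on }U_\delta,
\]
which is~\eqref{e:fg-5}.

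\textbf{Expected difficulty.} The calculation is essentially mechanical once one has Lemma~\ref{l:noupper} and Lemma~\ref{l:phi+}; the only subtle point is making sure the various wavefront-set conditions line up, i.e., that the cutoffs $B_1,B$ supplied by Lemma~\ref{l:noupper} sit inside $U_{3\delta}$ (where we have~\eqref{e:fg-1} and where $c_+$ provides the positivity) and that $\WFh(B)$ lies in the region $\{|\varphi_+|>\delta/2\}$ where~\eqref{e:fg-4} can absorb $\Theta_+u$. No auxiliary construction beyond what Lemma~\ref{l:phi+} already supplies should be needed.
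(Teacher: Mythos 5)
Your proposal is correct and follows essentially the same route as the paper's proof: commute $\Theta_+$ past $P-\lambda$ to produce the $ihW_+$ term, verify the positivity condition~\eqref{e:cim} with $W=W_+$, and invoke Lemma~\ref{l:noupper} using~\eqref{e:fg-4} to control the $\|B\Theta_+u\|$ term and~\eqref{e:fg-3} (your Step~1) to control the $\|B_1(P-ihW_+-\lambda)\Theta_+u\|$ term. The only small imprecision is in Step~3, where the cleaner justification for $\|B\Theta_+u\|=\mathcal O(h)$ is that $B\Theta_+\in\Psi^{\comp}_h$ has $\WFh(B\Theta_+)\subset\WFh(B)$, which misses $\Gamma_+\cap\{p=0\}$, so~\eqref{e:fg-4} applies directly to the operator $B\Theta_+$ rather than to $\Theta_+$ and $B$ separately.
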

\begin{proof}
Since
$H_p\varphi_+=-c_+\varphi_+$ on $U_{3\delta}$, we have
\begin{equation}
  \label{e:fg-2}
[P,\Theta_+]=ihW_+\Theta_++\mathcal O(h^2)_{\Psi^{\comp}_h}\quad\text{microlocally on }
U_{3\delta}.
\end{equation}
Applying $\Theta_+$ to~\eqref{e:fg-1} and using~\eqref{e:fg-2}, we obtain
\begin{equation}
  \label{e:fg-3}
(P-ihW_+-\lambda)\Theta_+u=\mathcal O(h^2)\quad\text{microlocally on }U_{3\delta}.
\end{equation}
We have $\sigma(W_+)=c_+\geq\nu_{\min}-\varepsilon$ on $U_{3\delta}$;
therefore, $\sigma(W_+)+h^{-1}\Im\lambda\geq \varepsilon>0$ on $U_{3\delta}$.
It remains to apply Lemma~\ref{l:noupper} to $\Theta_+u$
and use \eqref{e:fg-4} and~\eqref{e:fg-3}.
\end{proof}
Now, passing to a subsequence of $h_j$, we may assume that (in the sense of Definition~\ref{d:measures})
$$
u\to \mu,\quad
h^{-1}\Im\lambda\to \omega\in\mathbb C.
$$
By Lemma~\ref{l:concentrate}, we have
\begin{equation}
  \label{e:yippie}
\mu(U_\delta)>0,\quad
\mu(U_\delta\setminus\Gamma_+)=0.
\end{equation}
We now show that $\mu$ is Lipschitz in the direction transversal to $K$.
Recall that $\varphi_-$, constructed in Lemma~\ref{l:phi+}, is
a defining function of $K$ on $\Gamma_+\cap U_\delta$.

\begin{figure}
\includegraphics{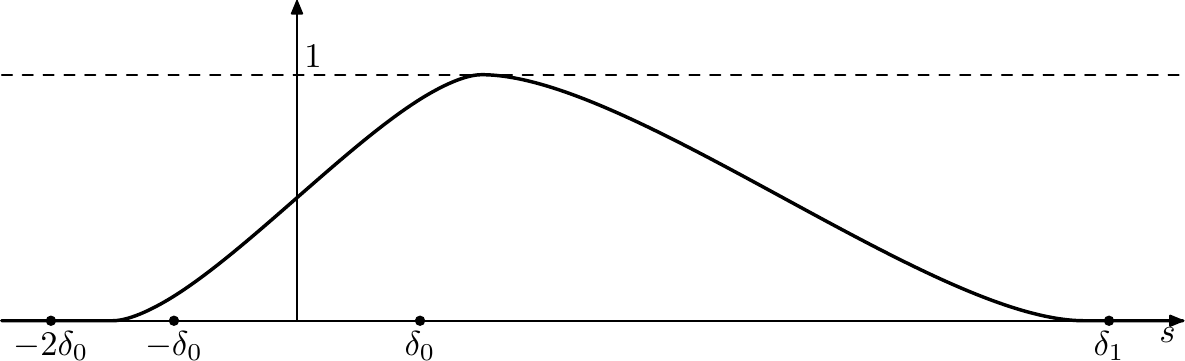}
\caption{The function $\tilde a$ used in the proof of Lemma~\ref{l:lipschiz}. Here
$\delta_1>0$ is fixed and $\delta_0\to 0$.}
\label{f:a}
\end{figure}

\begin{lemm}
\label{l:lipschiz}
There exists a constant $C$ such that for each $\delta_0>0$,
\begin{equation}
  \label{e:lipschiz}
\mu\big(U_\delta\cap \{|\varphi_-|<\delta_0\}\big)\leq C\delta_0.
\end{equation}
\end{lemm}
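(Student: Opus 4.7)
The plan is to convert the bound $\Theta_+u=\mathcal O(h)$ on $U_\delta$ (Lemma~\ref{l:theta-killed}) into Lipschitz regularity of $\mu$ in the $\varphi_-$-direction by feeding it into the propagation lemma and testing against a function built from $\varphi_-$. Concretely, I would apply Lemma~\ref{l:typical} with the self-adjoint $\Theta_+$ in place of $P$ and $W=0$, so that $f=\Theta_+u$. Fix an open $V\Subset U_\delta$ containing $\overline{U_{\delta/2}}$ and $Y\in\Psi_h^{\comp}(X)$ with $\WFh(Y)\subset U_\delta$ and $Y=1+\mathcal O(h^\infty)$ near $\overline V$; from $\|Y\Theta_+u\|_{L^2}=\mathcal O(h)$ and $\|Yu\|_{L^2}\le 1+\mathcal O(h^\infty)$ I would deduce the master estimate
\[
\bigg|\int_{T^*X} H_{\varphi_+}a\,d\mu\bigg|\le C_1\|a\|_\infty,\qquad a\in C_0^\infty(V),
\]
with $C_1$ independent of $a$.

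Next I would build the test function $\tilde a$ of Figure~\ref{f:a}. Since $H_{\varphi_+}\varphi_-=\{\varphi_+,\varphi_-\}\ge c_0>0$ on $U$ by Lemma~\ref{l:phi+}(iv), the flow of $H_{\varphi_+}$ is tangent to $\Gamma_+$ (as $H_{\varphi_+}\varphi_+=0$) and strictly monotone in $\varphi_-$. Pick $\chi\in C^\infty(\mathbb R)$ nondecreasing with $\chi=0$ on $(-\infty,-\delta_0]$, $\chi=2\delta_0$ on $[\delta_0,\infty)$, and $\chi'\ge 1$ on $[-\delta_0/2,\delta_0/2]$; and let $\eta\in C_0^\infty(T^*X)$ be supported in $V$ with $\eta\equiv 1$ on $\overline{U_{\delta/2}}$. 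Then $a:=\chi(\varphi_-)\eta\in C_0^\infty(V)$ satisfies $\|a\|_\infty\le 2\delta_0\|\eta\|_\infty$ and
\[
H_{\varphi_+}a=\chi'(\varphi_-)\{\varphi_+,\varphi_-\}\eta+\chi(\varphi_-)H_{\varphi_+}\eta,
\]
where the first summand is nonnegative and the second is pointwise bounded by $2\delta_0\|H_{\varphi_+}\eta\|_\infty$.

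Integrating against $\mu$: the master estimate controls $|\int H_{\varphi_+}a\,d\mu|\le 2C_1\|\eta\|_\infty\delta_0$, while the error term contributes at most $2\delta_0\|H_{\varphi_+}\eta\|_\infty\mu(\overline V)=\mathcal O(\delta_0)$. Subtracting and using $\chi'\ge\mathbf 1_{[-\delta_0/2,\delta_0/2]}$, $\{\varphi_+,\varphi_-\}\ge c_0$, and $\eta\equiv 1$ on $U_{\delta/2}$ yields $\mu(U_{\delta/2}\cap\{|\varphi_-|\le\delta_0/2\})\le C\delta_0$. The asserted bound on $U_\delta\cap\{|\varphi_-|<\delta_0\}$ follows by running the same argument with the role of $\delta$ played by a slightly larger value---a harmless move since $\delta$ is a free small parameter in Lemma~\ref{l:phi+} and Lemma~\ref{l:theta-killed}. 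No step is a genuine obstacle---morally this is an integration by parts along the $H_{\varphi_+}$-flow, transversal to $K$ inside $\Gamma_+$---but the careful bookkeeping of the nested cutoffs $U_{\delta'}$ so that the microlocal input $\Theta_+u=\mathcal O(h)$ holds on the support of $a$ is the one point requiring attention.
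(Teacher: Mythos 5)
Your proof is correct and follows essentially the same route as the paper: apply Lemma~\ref{l:typical} with $\Theta_+$ as the "$P$", use $\Theta_+u=\mathcal O(h)$ on $U_\delta$ to get the bound $|\int H_{\varphi_+}a\,d\mu|\leq C\|a\|_\infty$, and test against a monotone cutoff built from $\varphi_-$. The only cosmetic difference is the normalization of the test function: the paper uses the flow coordinates~\eqref{e:funny-coordinates} and a profile $\tilde a(s)$ with $\|\tilde a\|_\infty\leq 1$ and $\partial_s\tilde a\gtrsim 1/\delta_0$, whereas you use $\chi(\varphi_-)$ with $\|\chi\|_\infty\lesssim\delta_0$ and $\chi'\gtrsim 1$; these are rescalings of one another (and $s$ is comoving with $\varphi_-$ since $H_{\varphi_+}\varphi_-=\{\varphi_+,\varphi_-\}>0$), so the two choices are interchangeable.
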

\begin{proof}
We may assume that $\delta_0$ is arbitrarily small.
Applying Lemma~\ref{l:typical} to~\eqref{e:fg-5}, we see that
there exists a constant $C$ such that for each $a\in C_0^\infty(\Gamma_+\cap U_{\delta})$,
\begin{equation}
  \label{e:uni}
\bigg|\int_{\Gamma_+\cap U_\delta} H_{\varphi_+} a\,d\mu\bigg|\leq C\|a\|_{\infty}.
\end{equation}
To see~\eqref{e:lipschiz}, we need to apply this estimate to
a function $a$ depending on $\delta_0$ which, written in coordinates $(\rho,s)$ of~\eqref{e:funny-coordinates},
has the form $a=\tilde a(s)\chi(\rho)$ with $\chi\in C_0^\infty(K\cap \{|p|<\delta\})$
equal to 1 on $K\cap \{p=0\}$ and
$$
\begin{gathered}
\supp \tilde a\subset (-2\delta_0, \delta_1),\quad
\|\tilde a\|_{\infty}\leq 1,\quad
\partial_s \tilde a\geq -{2\over\delta_1},
\\
\partial_s \tilde a\geq {1\over 3\delta_0}\quad\text{for }|s|\leq \delta_0
\end{gathered}
$$
for some fixed small $\delta_1\in (0,\delta)$ independent of $\delta_0$, see Figure~\ref{f:a}.
Then
$$
\int_{\Gamma_+\cap U_\delta} H_{\varphi_+}a\,d\mu
\geq {1\over 3\delta_0} \mu\big(\{|s|\leq\delta_0\}\big)
-{2\over \delta_1}\mu\big(\{\delta_0\leq |s|\leq \delta_1\}\big)
$$
The left-hand side is bounded by a $\delta_0$-independent constant by~\eqref{e:uni};
so is the second term on the right-hand side (since $\mu$ is a finite measure).
Multiplying both sides by $3\delta_0$
and using that $\Gamma_+\cap U_\delta\cap \{|\varphi_-|\leq \delta_0\}
\subset \{|s|\leq C\delta_0\}$ for some constant $C$,
we obtain~\eqref{e:lipschiz}.
\end{proof}

We now have by Lemma~\ref{l:typical} applied to~\eqref{e:fg-1},
for each $a\in C_0^\infty(U_\delta)$,
$$
\int_{\Gamma_+\cap U_\delta} H_pa\,d\mu=2\Im\omega \int_{\Gamma_+\cap U_\delta}a\,d\mu.
$$
For $t\geq 0$, since $e^{-tH_p}(\Gamma_+\cap U_\delta)\subset\Gamma_+\cap U_\delta$,
we have
\begin{equation}
  \label{e:measure-below}
\mu(e^{-tH_p}(U_\delta))=e^{2t\Im\omega}\mu(U_\delta).
\end{equation}
However, by~\eqref{e:U-delta-map}, \eqref{e:yippie}, and~\eqref{e:lipschiz}, as $t\to +\infty$
$$
\mu(e^{-tH_p}(U_\delta))\leq \mu\big(U_\delta\cap \{|\varphi_-|<\delta e^{-(\nu_{\min}-\varepsilon)t}\}\big)\leq Ce^{-(\nu_{\min}-\varepsilon)t}.
$$
However, since $\Im\omega\geq -(\nu_{\min}-2\varepsilon)/2$,
we see that $e^{2t\Im\omega}$ decays exponentially slower
than $e^{-(\nu_{\min}-\varepsilon)t}$ as $t\to +\infty$.
Since $\mu(U_\delta)>0$
by~\eqref{e:yippie}, we arrive to a contradiction, finishing the proof of Theorem~\ref{t:gap1}.


\section{Further gaps for $r$-normally hyperbolic trapping}
\label{s:gap2}

In this section, we prove Theorem~\ref{t:gap2}.
For small $\delta>0$, let $U_{\delta}$ be defined by~\eqref{e:U-delta}.

\subsection{Auxiliary microlocal construction}
The proof relies on the following statement, which in particular makes it possible
to improve the remainder in~\eqref{e:fg-2} to $\mathcal O(h^\infty)$.
For $Z=0$, the operators $\Theta,W$ below were previously
constructed in~\cite[Proposition~7.1]{nhp}.
\begin{lemm}
  \label{l:theta-more}
Assume that the trapping is $r$-normally hyperbolic for all $r$, in the sense of~\eqref{e:r-nh}.
Then for each $Z\in\Psi^{\comp}_h(X)$, there exist
$\Theta,W\in \Psi^{\comp}_h(X)$
such that
\begin{gather}
  \label{e:theta-more}
\Theta(P-ihZ)=(P-ih W)\Theta+\mathcal O(h^\infty)\quad\text{microlocally on }U_{3\delta};\\
  \label{e:theta-more-2}
\sigma(\Theta)=\varphi_+,\quad
\sigma(W)=\sigma(Z)+c_+\quad\text{on }U_{3\delta}.
\end{gather}
\end{lemm}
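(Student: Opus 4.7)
The plan is to construct $\Theta$ and $W$ iteratively in powers of $h$, solving the intertwining equation
$$
\Theta(P-ihZ) - (P-ihW)\Theta = \mathcal{O}(h^\infty)
$$
order by order. Write $\sigma(\Theta) \sim \sum_{k \geq 0} h^k \theta_k$ and $\sigma(W) \sim \sum_{k \geq 0} h^k w_k$ on $U_{3\delta}$, forced to start at $\theta_0 = \varphi_+$ and $w_0 = \sigma(Z) + c_+$. Expanding the symbol of $\Theta P - P\Theta - ih\Theta Z + ihW\Theta$ via the semiclassical product formula, the $h^1$ coefficient is
$$
-i\{\varphi_+, p\} - i\varphi_+ \sigma(Z) + i w_0 \varphi_+ = i\varphi_+\big(w_0 - \sigma(Z) - c_+\big),
$$
using $H_p\varphi_+ = -c_+ \varphi_+$ from Lemma~\ref{l:phi+}; this vanishes exactly by the choice of $w_0$, confirming \eqref{e:theta-more-2}.

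For the inductive step I would show that, modulo data already determined by $\theta_0,\dots,\theta_{k-1}$ and $w_0,\dots,w_{k-1}$, the vanishing of the order $h^{k+1}$ symbol amounts to an equation of the form
$$
(H_p + c_+)\theta_k + w_k \varphi_+ = f_k \quad \text{on } U_{3\delta},
$$
with $f_k \in C^\infty(U_{3\delta})$ explicit in lower-order terms. To solve this, first restrict to $\Gamma_+ = \{\varphi_+ = 0\}$: Lemma~\ref{l:transport} (the crucial point where $r$-normal hyperbolicity for all $r$ is used) yields a unique $\theta_k|_{\Gamma_+\cap U_\delta} \in C^\infty(\Gamma_+\cap U_\delta)$ with $(H_p + c_+)\theta_k|_{\Gamma_+} = f_k|_{\Gamma_+}$. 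Extend $\theta_k$ smoothly off $\Gamma_+$ (e.g.\ via a tubular neighborhood, transporting along the flow of $H_{\varphi_-}$), then define
$$
w_k := \frac{f_k - (H_p + c_+)\theta_k}{\varphi_+},
$$
which is smooth because the numerator vanishes on the regular hypersurface $\{\varphi_+ = 0\}$ and hence is divisible by $\varphi_+$.

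To finish, I would Borel sum $\theta \sim \sum h^k\theta_k$ and $w \sim \sum h^k w_k$, multiply by a cutoff equal to $1$ on a compact neighborhood of $U_{3\delta}$ so that the symbols lie in $S^{\comp}$, and quantize to obtain $\Theta, W \in \Psi^{\comp}_h(X)$. By construction $\sigma(\Theta) = \varphi_+$ and $\sigma(W) = \sigma(Z) + c_+$ on $U_{3\delta}$, and the intertwining relation holds to all orders of $h$ there.

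The main obstacle is ensuring the iteration closes at \emph{every} order of $h$ in the smooth category. This is precisely the role of the $r$-normal hyperbolicity assumption for all $r$: it is what guarantees that Lemma~\ref{l:transport} produces $C^\infty$ (rather than only $C^r$) solutions at each induction step, so the asymptotic series can be continued indefinitely and Borel summed. Apart from this, bookkeeping the higher Moyal terms contributing to $f_k$ is routine, as is the compactly-microlocalized cutoff, which only modifies $\Theta$ and $W$ outside $U_{3\delta}$ by $\mathcal{O}(h^\infty)$. The case $Z=0$ is already handled in~\cite[Proposition~7.1]{nhp}, and the present generalization goes through identically since $w_0 = \sigma(Z) + c_+$ differs from $c_+$ by a smooth function that does not affect the structure of the transport equation.
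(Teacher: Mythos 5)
Your proposal is correct and follows essentially the same path as the paper's proof: both construct $\Theta, W$ order by order in $h$, reduce the $h^{k+1}$ coefficient to the scalar equation $(H_p+c_+)\theta_k + w_k\varphi_+ = f_k$ on $U_{3\delta}$, exploit the freedom in $w_k$ (the paper's $\sigma(Y_k)$) together with $\varphi_+$ being a defining function of $\Gamma_+$ to reduce to the transport equation $(H_p+c_+)\theta_k = f_k$ on $\Gamma_+\cap U_{3\delta}$, and solve it via Lemma~\ref{l:transport}, which is where $r$-normal hyperbolicity for all $r$ enters. The only cosmetic difference is that you work directly with the full symbol expansions $\theta_k, w_k$ and Borel-sum at the end, while the paper carries along operator approximants $\Theta_k, W_k$ with explicit remainders $h^{k+2}R_k$ and takes an asymptotic limit; these are interchangeable formulations of the same induction.
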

\begin{proof}
We construct operators
$$
\Theta_k,W_k\in\Psi^{\comp}_h(X),\quad
k=0,1,\dots,
$$
such that microlocally in $U_{3\delta}$, we have for some $R_{k}\in \Psi^{\comp}_h(X)$,
\begin{equation}
  \label{e:commu}
\Theta_k(P-ihZ)=(P-ihW_k)\Theta_{k}+h^{k+2}R_k+\mathcal O(h^\infty)
\end{equation}
and take $\Theta,W$ to be the asymptotic limits as $k\to +\infty$ of these sequences
of operators.

We use induction on $k$. For $k=0$, take any $\Theta_0,W_0$ satisfying in $U_{3\delta}$,
$$
\sigma(\Theta_0)=\varphi_+,\quad
\sigma(W_0)=\sigma(Z)+c_+,
$$
then~\eqref{e:commu} follows immediately from~\eqref{e:identities} and the fact that
$H_p\varphi_+=-c_+\varphi_+$ on $U_{3\delta}$.

Assume now that $k>0$ and~\eqref{e:commu} holds for $k-1$; we construct the operators
$\Theta_k$ and $W_k$ in the form
$$
\Theta_k:=\Theta_{k-1}+h^k\Omega_k,\quad
W_k:=W_{k-1}+h^k Y_k,
$$
where $\Omega_k,Y_k\in\Psi^{\comp}_h(X)$. We rewrite~\eqref{e:commu}
for $k$ as the following statement, valid microlocally in $U_{3\delta}$:
$$
[P,\Omega_k]+ih(\Omega_kZ- W_{k-1}\Omega_k
-Y_k\Theta_{k-1})=hR_{k-1}+\mathcal O(h^2)_{\Psi^{\comp}_h}.
$$
and this translates to the following equation on the principal symbols%
\footnote[2]{This is the place where we cannot take $P$ to act on sections of
a vector bundle, as $R_{k-1}$ and thus $\Omega_k,Y_k$ need not be principally
scalar and a more complicated transport equation would be required.}
of $\Omega_k,Y_k$
in $U_{3\delta}$:
$$
(H_p+c_+)\sigma(\Omega_k)+\varphi_+\sigma(Y_k)=i\sigma(R_{k-1}).
$$
Since we can choose $\sigma(Y_k)$ arbitrarily and $\varphi_+$ is a defining function of $\Gamma_+$, it suffices to
find $\Omega_k$ whose symbol solves the transport equation
$$
(H_p+c_+)\sigma(\Omega_k)=i\sigma(R_{k-1})\quad\text{on }\Gamma_+\cap U_{3\delta}.
$$
The existence of such a symbol follows from Lemma~\ref{l:transport}
(which is where we use $r$-normal hyperbolicity), finishing the construction
of $\Theta_k,W_k$.
\end{proof}

Iterating Lemma~\ref{l:theta-more}, we obtain
\begin{lemm}
Assume that the trapping
is $r$-normally hyperbolic for all $r$.
Then there exist operators
$$
\Theta^+_m,W^+_m\in \Psi^{\comp}_h(X),\quad m=0,1,\dots
$$
such that $W^+_0=0$ and
\begin{equation}
  \label{e:theta-more-3}
\Theta^+_m(P-ihW^+_m)=(P-ih W^+_{m+1})\Theta^+_m+\mathcal O(h^\infty)\quad\text{microlocally on }U_{3\delta}.
\end{equation}
Moreover, we have
$$
\sigma(\Theta^+_m)=\varphi_+,\quad
\sigma(W^+_m)=mc_+\quad\text{on }U_{3\delta}.
$$
\end{lemm}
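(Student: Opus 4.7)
The plan is a direct induction on $m$, using Lemma \ref{l:theta-more} as a black box at each step. The base case is trivial: set $W^+_0 := 0 \in \Psi^{\comp}_h(X)$, which clearly has $\sigma(W^+_0) = 0 = 0 \cdot c_+$ on $U_{3\delta}$.

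For the inductive step, suppose $W^+_m \in \Psi^{\comp}_h(X)$ has been constructed with $\sigma(W^+_m) = m c_+$ on $U_{3\delta}$. I apply Lemma \ref{l:theta-more} with the input operator $Z := W^+_m$; this produces operators $\Theta, W \in \Psi^{\comp}_h(X)$ satisfying
$$
\Theta(P - ih W^+_m) = (P - ih W)\Theta + \mathcal O(h^\infty) \quad \text{microlocally on } U_{3\delta},
$$
together with the principal symbol identities $\sigma(\Theta) = \varphi_+$ and $\sigma(W) = \sigma(W^+_m) + c_+ = (m+1) c_+$ on $U_{3\delta}$. I then define $\Theta^+_m := \Theta$ and $W^+_{m+1} := W$. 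This immediately gives the commutation relation \eqref{e:theta-more-3} and the correct principal symbol of $W^+_{m+1}$, closing the induction.

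There is essentially no obstacle: the whole content has been absorbed into Lemma \ref{l:theta-more}, whose proof already relied on $r$-normal hyperbolicity through Lemma \ref{l:transport}. The only thing to verify is bookkeeping on the principal symbols, which telescopes as $\sigma(W^+_m) = \sigma(W^+_{m-1}) + c_+ = \cdots = m c_+$ starting from $\sigma(W^+_0) = 0$. Note that $r$-normal hyperbolicity for all $r$ is used at each step (since we have to solve a new transport equation for each $m$), but the construction imposes no upper bound on how many times Lemma \ref{l:theta-more} may be invoked, so the full family $\{\Theta^+_m, W^+_m\}_{m \geq 0}$ can be produced.
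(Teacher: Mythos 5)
Your proposal is correct and matches the paper's intent exactly: the paper states only ``Iterating Lemma~\ref{l:theta-more}, we obtain'' this lemma, and your induction (setting $W^+_0=0$, then at each step feeding $Z=W^+_m$ into Lemma~\ref{l:theta-more} to produce $\Theta^+_m$ and $W^+_{m+1}$) is precisely that iteration, with the telescoping symbol computation $\sigma(W^+_{m+1})=\sigma(W^+_m)+c_+=(m+1)c_+$ verified correctly.
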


\subsection{Measures associated to resonant states and proof of Theorem~\ref{t:gap2}}

Assume that $u=u(h_j)\in L^2(X)$ is bounded uniformly in $h$ and define
$u_0,u_1,\ldots\in L^2(X)$ by the relations
\begin{equation}
  \label{e:u-m}
u_0:=u,\quad
u_{m+1}:=\Theta^+_m u_m.
\end{equation}
\begin{lemm}
  \label{l:refinery}
Assume that for some $\alpha\geq 0$ and $\lambda=\mathcal O(h)$,
\begin{equation}
  \label{e:condic0}
\|(P-iQ-\lambda)u\|_{L^2}=o(h^{\alpha+1}).
\end{equation}
Then for each $m\geq 0$,
\begin{gather}
  \label{e:condic1}
(P-ihW^+_m-\lambda)u_m=o(h^{\alpha+1})\quad\text{microlocally on }U_{3\delta};\\
  \label{e:condic2}
u_m=o(h^\alpha)\quad\text{microlocally on }U_{3\delta}\setminus (\Gamma_+\cap \{p=0\}).
\end{gather}
Moreover, if
\begin{equation}
  \label{e:lambda-con}
\Im\lambda>-m(\nu_{\min}-\varepsilon)h+\varepsilon h,
\end{equation}
then $u_m=o(h^{\alpha})$ microlocally on $U_\delta$.
\end{lemm}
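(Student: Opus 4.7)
The plan is to establish~\eqref{e:condic1} and~\eqref{e:condic2} by induction on $m$, and then derive the ``moreover'' clause by a single application of Lemma~\ref{l:noupper} to $u_m$ with $W=W^+_m$.

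For the base case $m=0$: since $W^+_0=0$ and $\WFh(Q)\cap U=\emptyset$ from Lemma~\ref{l:phi+}, we have $(P-\lambda)u=(P-iQ-\lambda)u+iQu=o(h^{\alpha+1})$ microlocally on $U_{3\delta}$, which is~\eqref{e:condic1}; and~\eqref{e:condic2} is immediate from Lemma~\ref{l:concentrate} applied to $u$, which in fact produces $u=o(h^\alpha)$ microlocally on the whole of $T^*X\setminus(\Gamma_+\cap\{p=0\})$.

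For the inductive step, applying $\Theta^+_m$ to the equation in~\eqref{e:condic1} and using the intertwining~\eqref{e:theta-more-3} microlocally on $U_{3\delta}$ yields
\[
(P-ihW^+_{m+1}-\lambda)u_{m+1}=\Theta^+_m(P-ihW^+_m-\lambda)u_m+\mathcal{O}(h^\infty)u_m,
\]
from which~\eqref{e:condic1} at level $m+1$ follows, using uniform $L^2$-boundedness of $\Theta^+_m$ and of $u_m$ (the latter inherited through the induction). For~\eqref{e:condic2} at level $m+1$, given any $A\in\Psi^{\comp}_h(X)$ with $\WFh(A)\subset U_{3\delta}\setminus(\Gamma_+\cap\{p=0\})$, the composition $A\Theta^+_m$ is still compactly microlocalized with $\WFh(A\Theta^+_m)\subset\WFh(A)$, so $\|Au_{m+1}\|_{L^2}=\|(A\Theta^+_m)u_m\|_{L^2}=o(h^\alpha)$ by the inductive hypothesis on $u_m$.

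For the ``moreover'' clause, since $\sigma(W^+_m)=mc_+\geq m(\nu_{\min}-\varepsilon)$ on $U_{3\delta}$ by Lemma~\ref{l:phi+}(3), and~\eqref{e:lambda-con} is exactly $h^{-1}\Im\lambda>-m(\nu_{\min}-\varepsilon)+\varepsilon$, the positivity condition~\eqref{e:cim} of Lemma~\ref{l:noupper} is met with $W=W^+_m$. Choose $A,B,B_1$ as in that lemma: then~\eqref{e:condic1} gives $\|B_1(P-ihW^+_m-\lambda)u_m\|_{L^2}=o(h^{\alpha+1})$, and since $\WFh(B)\subset\{|\varphi_+|>\delta/2\}\subset U_{3\delta}\setminus(\Gamma_+\cap\{p=0\})$, the bound~\eqref{e:condic2} gives $\|Bu_m\|_{L^2}=o(h^\alpha)$. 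Plugging both into Lemma~\ref{l:noupper} yields $\|Au_m\|_{L^2}=o(h^\alpha)$, i.e.\ $u_m=o(h^\alpha)$ microlocally on $U_\delta$. The argument is essentially mechanical once~\eqref{e:theta-more-3} is in hand; the two points to keep track of are that ``$o(h^\alpha)$ microlocally'' passes through left multiplication by the compactly microlocalized operator $\Theta^+_m$ (needed for the inductive step of~\eqref{e:condic2}) and that the $\varepsilon h$ slack built into~\eqref{e:lambda-con} is precisely enough to produce a strictly positive constant in~\eqref{e:cim}.
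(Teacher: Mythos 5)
Your proof is correct and follows essentially the same route as the paper: the intertwining relation \eqref{e:theta-more-3} drives the induction for \eqref{e:condic1}, \eqref{e:condic2} follows because $u_m$ is $u$ hit by compactly microlocalized operators (your inductive phrasing is equivalent to the paper's direct observation $u_m=\Theta^+_{m-1}\cdots\Theta^+_0u$), and the final statement is Lemma~\ref{l:noupper} with $W=W^+_m$ using $\sigma(W^+_m)=mc_+\geq m(\nu_{\min}-\varepsilon)$ on $U_{3\delta}$.
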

\begin{proof}
The condition~\eqref{e:condic1} for $m=0$ follows directly from~\eqref{e:condic0}
(since $Q=\mathcal O(h^\infty)$ microlocally on $U_{3\delta}$);
for $m>0$, it follows by induction from~\eqref{e:theta-more-3}. The condition~\eqref{e:condic2}
for $m=0$ follows from Lemma~\ref{l:concentrate}, and is still true for $m\geq 0$
since $u_m$ is obtained by applying an $h$-pseudodifferential operator to $u$.

The last statement of this lemma follows from~\eqref{e:condic1}, \eqref{e:condic2}
by Lemma~\ref{l:noupper}, as on $U_{3\delta}$, we have
$\sigma(W^+_m)=mc_+\geq m(\nu_{\min}-\varepsilon)$ and thus $\sigma(W^+_m)+h^{-1}\Im\lambda\geq \varepsilon >0$.
\end{proof}
We now obtain information on semiclassical measures of solutions to $\Theta^+_m v=o(h)$,
which generalizes the Lipschitz property of Lemma~\ref{l:lipschiz}. It is possible
to give a more precise description of the measure along the Hamiltonian flow
lines of $\varphi_+$; for $\Theta^+_0$, this is done in~\cite[\S8.5, Theorem~4]{nhp}.
\begin{lemm}
  \label{l:lab}
Assume that $v=v(h_j)\in L^2(X)$
converges to some measure $\mu$ in the sense of Definition~\ref{d:measures},
and $\mu(U_\delta)>0$, $\mu(U_\delta\setminus\Gamma_+)=0$.
Assume also that for some $m$,
$$
\Theta^+_mv=o(h)\quad\text{microlocally in }U_\delta.
$$
Then for some constant $C$ and for each $\delta_0\in(0,1)$, we have
$$
C^{-1}\delta_0\leq \mu\big(U_\delta\cap \{|\varphi_-|<\delta_0\}\big)\leq C\delta_0.
$$
\end{lemm}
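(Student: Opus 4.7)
The plan is to derive an (approximate) transport equation for $\mu$ from the hypothesis $\Theta^+_m v=o(h)$, and then use it to show that $\mu$ is absolutely continuous along $\Gamma_+$ in the direction transverse to $K$, with density bounded above and below by positive constants. The upper bound essentially follows as in Lemma~\ref{l:lipschiz}; the new content is the lower bound.

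\textbf{Transport equation.} Because $\sigma(\Theta^+_m)=\varphi_+$ is real, $(\Theta^+_m)^*-\Theta^+_m\in h\Psi_h^{\comp}(X)$, so I write $\Theta^+_m=P_0+ihT$ with $P_0:=\tfrac12(\Theta^+_m+(\Theta^+_m)^*)$ self-adjoint of principal symbol $\varphi_+$ and $T:=\tfrac{1}{2ih}(\Theta^+_m-(\Theta^+_m)^*)$ self-adjoint of some real principal symbol $\tau$. The hypothesis $\Theta^+_m v=o(h)$ then reads $(P_0-ih(-T))v=o(h)$ microlocally on $U_\delta$, and Lemma~\ref{l:typical} yields
$$
\int H_{\varphi_+}a\,d\mu=-2\int\tau a\,d\mu,\qquad a\in C_0^\infty(U_\delta).
$$

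\textbf{Weighting to invariance.} Let $F\in C^\infty(\Gamma_+\cap U_\delta)$ solve $H_{\varphi_+}F=-2\tau F$ with $F|_K=1$; explicitly $F(\rho,s)=\exp(-2\int_0^s\tau(e^{rH_{\varphi_+}}(\rho))\,dr)$ in the coordinates~\eqref{e:funny-coordinates}, which is smooth, positive, and pinched between positive constants on $\Gamma_+\cap U_\delta$. Extend $F$ smoothly to $U_\delta$ (the values off $\Gamma_+$ are irrelevant since $\mu$ is supported on $\Gamma_+$) and set $\tilde\mu=F\mu$. Plugging $Fa$ into the transport equation in place of $a$ and using $H_{\varphi_+}(Fa)=-2\tau Fa+FH_{\varphi_+}a$ on $\Gamma_+$ gives, after cancellation,
$$
\int H_{\varphi_+}a\,d\tilde\mu=0\qquad\text{for every }a\in C_0^\infty(U_\delta).
$$

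\textbf{Conclusion.} The map~\eqref{e:funny-coordinates} identifies $\Gamma_+\cap U_\delta$ with a `tube' $\{(\rho,s):\rho\in K_\delta,\,|s|<S(\rho)\}$ on which $H_{\varphi_+}=\partial_s$. A standard disintegration argument then shows that a $\partial_s$-invariant nonnegative Radon measure on such a tube has the form $\tilde\mu=\nu_K(d\rho)\otimes ds$ for some Radon measure $\nu_K$ on $K_\delta$; since $F$ is pinched between positive constants and $\mu(U_\delta)>0$, one has $\nu_K(K_\delta)>0$. Because $\partial_s\varphi_-|_{s=0}=\{\varphi_+,\varphi_-\}>0$ uniformly on $K_\delta$, each slice $\{s:|\varphi_-(\rho,s)|<\delta_0\}$ has Lebesgue measure $\asymp\delta_0$ uniformly in $\rho$, so
$$
\mu\bigl(U_\delta\cap\{|\varphi_-|<\delta_0\}\bigr)=\int_{K_\delta}\int_{\{|\varphi_-(\rho,s)|<\delta_0\}}F^{-1}(\rho,s)\,ds\,d\nu_K(\rho)\asymp\delta_0,
$$
which gives both bounds (the range $\delta_0\in[\delta,1)$ being trivial from $0<\mu(U_\delta)<\infty$).

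The main obstacle is the weighting step: one must legitimately apply the transport equation to the test function $Fa$ when $F$ is intrinsically defined only on $\Gamma_+\cap U_\delta$, but this is harmless after a smooth extension to $U_\delta$ since $\mu$ sees only $F|_{\Gamma_+}$. The essential improvement over Lemma~\ref{l:lipschiz} is that the sharper hypothesis $\Theta^+_m v=o(h)$ produces an equality (not merely an inequality) in the transport equation, which via the weight $F$ yields an exactly $H_{\varphi_+}$-invariant measure, and hence the two-sided comparison.
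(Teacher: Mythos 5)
Your proposal is correct and follows essentially the same route as the paper: split $\Theta^+_m$ into self-adjoint and skew parts, invoke Lemma~\ref{l:typical} to obtain the transport identity $\int(H_{\varphi_+}+2\tau)a\,d\mu=0$, and deduce that $\mu$ factors as $e^{2\psi}$ times a product $\nu_K\otimes ds$ in the coordinates~\eqref{e:funny-coordinates}. You merely make explicit the conjugation by a weight $F$ (the paper's $e^{2\psi}$) and the disintegration argument that the paper states without elaboration.
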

\begin{proof}
Let $b:=\sigma(h^{-1}\Im \Theta^+_m)$, where we denote
$\Im A:={1\over 2i}(A-A^*)$. 
By Lemma~\ref{l:typical}, we see that for each $a\in C_0^\infty(U_\delta)$,
$$
\int_{U_\delta\cap \Gamma_+} (H_{\varphi_+}+2b) a\,d\mu=0.
$$
It remains to note that in the coordinates~\eqref{e:funny-coordinates},
we have for each $a\in C_0^\infty(U_\delta)$
$$
\int a\,d\mu=\int e^{2\psi}a \,d\tilde\mu(\rho) ds
$$
where $\psi\in C^\infty(U_\delta\cap \Gamma_+)$ is some fixed solution to the equation
$H_{\varphi_+}\psi=b$ and $\tilde\mu$ is some measure on $K$.
\end{proof}
Lemma~\ref{l:lab} makes it possible to show
bounds on approximate solutions to $\Theta^+_mv=0$ and $(P-ihW_m^+-\lambda)v=0$
for $\Im\lambda$ outside of a certain interval:
\begin{lemm}
  \label{l:lab2}
Let $A,B_1\in\Psi^{\comp}_h(X)$ satisfy
\begin{itemize}
\item $\WFh(A)\subset U_{3\delta/2}$ and $A=1+\mathcal O(h^\infty)$ microlocally on $\overline{U_{\delta}}$;
\item $\WFh(B_1)\subset U_{3\delta}$ and $B_1=1+\mathcal O(h^\infty)$ microlocally on $\overline{U_{2\delta}}$.
\end{itemize}
Fix $m$ and assume that $\lambda=\mathcal O(h)$ is such that
\begin{equation}
  \label{e:luo}
h^{-1}\Im\lambda\notin [-(m+1/2)(\nu_{\max}+\varepsilon)-\varepsilon,-(m+1/2)(\nu_{\min}-\varepsilon)+\varepsilon].
\end{equation}
Then for each $v\in L^2(X)$,
$$
\|Av\|_{L^2}\leq Ch^{-1}\big(\|B_1(P-ihW_m^+-\lambda)v\|_{L^2}+\|B_1 \Theta^+_mv\|_{L^2}\big)
+\mathcal O(h^\infty)\|v\|_{L^2}.
$$
\end{lemm}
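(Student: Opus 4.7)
Argue by contradiction: suppose there exist $h_j\to 0$, $\lambda=\lambda(h_j)$ satisfying~\eqref{e:luo}, and $v=v(h_j)\in L^2(X)$ with $\|v\|_{L^2}\leq 1$, $\|B_1(P-ihW_m^+-\lambda)v\|_{L^2}=o(h)$, $\|B_1\Theta_m^+v\|_{L^2}=o(h)$, yet $\|Av\|_{L^2}\geq c>0$. Since $B_1=1+\mathcal O(h^\infty)$ microlocally on $\overline{U_{2\delta}}$, this gives $(P-ihW_m^+-\lambda)v=o(h)$ and $\Theta_m^+v=o(h)$ microlocally on $U_{2\delta}$. Pass to a subsequence so that $v\to\mu$ (in the sense of Definition~\ref{d:measures}) and $h^{-1}\lambda\to\omega\in\mathbb{C}$. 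Lemma~\ref{l:measure-elliptic} applied to both equations (principal symbols $p$ and $\varphi_+$) shows $\mu$ is supported on $\Gamma_+\cap\{p=0\}$ within $U_{2\delta}$, and Lemma~\ref{l:typical} applied to the first equation, with $\sigma(W_m^+)=mc_+$, yields the transport identity
\begin{equation*}
\int H_p a\,d\mu=\int (2mc_++2\Im\omega)\,a\,d\mu,\qquad a\in C_0^\infty(U_{2\delta}).
\end{equation*}
The relation $\int|\sigma(A)|^2\,d\mu=\lim\|Av\|_{L^2}^2\geq c^2$ and a short application of the identity, propagating this mass forward under $H_p$ for a fixed time $t_0=(\nu_{\min}-\varepsilon)^{-1}\log(3/2)$ during which $e^{-t_0H_p}$ maps $\Gamma_+\cap U_{3\delta/2}$ into $\Gamma_+\cap U_\delta$ by~\eqref{e:U-delta-map}, give $\mu(U_\delta)>0$. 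Lemma~\ref{l:lab} then delivers the two-sided Lipschitz bound $C^{-1}\delta_0\leq\mu(U_\delta\cap\{|\varphi_-|<\delta_0\})\leq C\delta_0$.

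\emph{Top case,} $h^{-1}\Im\lambda>-(m+1/2)(\nu_{\min}-\varepsilon)+\varepsilon$: I would essentially replay the proof of Theorem~\ref{t:gap1}. Take a nonnegative $a\in C_0^\infty(U_\delta)$ with $F(0):=\int a\,d\mu>0$ and set $F(t):=\int a\circ e^{tH_p}\,d\mu$. The transport identity and $c_+\geq\nu_{\min}-\varepsilon$ give the lower bound $F(t)\geq F(0)e^{2(m(\nu_{\min}-\varepsilon)+\Im\omega)t}$, while~\eqref{e:U-delta-map} places $\supp(a\circ e^{tH_p})\cap\Gamma_+$ inside $\{|\varphi_-|<\delta e^{-(\nu_{\min}-\varepsilon)t}\}$, and combined with the upper Lipschitz bound this yields $F(t)\leq C\delta e^{-(\nu_{\min}-\varepsilon)t}$. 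Letting $t\to+\infty$ forces $2\Im\omega+(2m+1)(\nu_{\min}-\varepsilon)\leq 0$, contradicting the assumption.

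\emph{Bottom case,} $h^{-1}\Im\lambda<-(m+1/2)(\nu_{\max}+\varepsilon)-\varepsilon$: this is the novel ingredient. For small $\delta_0>0$ pick $a_{\delta_0}\in C_0^\infty(U_\delta)$ with $a_{\delta_0}\equiv 1$ on $\{|\varphi_-|<\delta_0/2\}$ and $\supp a_{\delta_0}\subset\{|\varphi_-|<\delta_0\}$; the \emph{lower} Lipschitz bound now gives $G(0):=\int a_{\delta_0}\,d\mu\geq c\delta_0$. Reverse the flow direction: set $G(t):=\int a_{\delta_0}\circ e^{-tH_p}\,d\mu$ for $t\geq 0$, so the support of the integrand expands in the $\varphi_-$ direction. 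The assumption on $\Im\omega$ together with $c_+\leq\nu_{\max}+\varepsilon$ makes $2mc_++2\Im\omega\leq-(\nu_{\max}+\varepsilon)-2\varepsilon<0$ on the support, so the transport identity rewritten as $G'(t)=-\int(2mc_++2\Im\omega)(a_{\delta_0}\circ e^{-tH_p})\,d\mu$ yields
\begin{equation*}
G(0)\leq G(t)e^{(2m(\nu_{\max}+\varepsilon)+2\Im\omega)t},\qquad 0\leq t\leq t_*:=(\nu_{\max}+\varepsilon)^{-1}\log(\delta/\delta_0),
\end{equation*}
where $t_*$ is chosen so that $\supp(a_{\delta_0}\circ e^{-t_*H_p})$ on $\Gamma_+$ just reaches $\{|\varphi_-|=\delta\}$. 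The upper Lipschitz bound then gives $G(t_*)\leq C\delta$, so $c\delta_0\leq G(0)\leq C\delta(\delta_0/\delta)^\beta$ with $\beta:=-(2m(\nu_{\max}+\varepsilon)+2\Im\omega)/(\nu_{\max}+\varepsilon)>1$. Since $1-\beta<0$, this fails as $\delta_0\to 0$, completing the contradiction. The main obstacle (absent from Theorem~\ref{t:gap1}) is precisely this bottom case: it relies on the \emph{lower} Lipschitz bound made available through Lemma~\ref{l:lab}, on running the transport ODE in the reverse direction out to an optimal stopping time $t_*\to\infty$ as $\delta_0\to 0$, and on the test functions $a_{\delta_0}\circ e^{-tH_p}$ remaining microlocally in $U_{2\delta}$ throughout $0\leq t\leq t_*$ (justified because $\mu$ is concentrated on $\Gamma_+$ and the flow on $\Gamma_+$ only changes the $\varphi_-$ coordinate).
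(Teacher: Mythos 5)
Your proposal is correct and uses the same essential ingredients as the paper's proof: reducing to a microlocal statement, passing to a semiclassical defect measure, the transport identity from Lemma~\ref{l:typical}, the two-sided Lipschitz bound from Lemma~\ref{l:lab}, and a case split according to whether $\Im\omega$ lies above or below the excluded interval. The one structural difference is in the bottom case: the paper propagates $a\approx\indic_{U_{3\delta/2}}$ under $a\mapsto a\circ e^{tH_p}$, obtains $\mu(e^{-tH_p}(\Gamma_+\cap U_{3\delta/2}))\leq e^{-(\nu_{\max}+3\varepsilon)t}\mu(U_{3\delta/2})$, and combines this with the lower Lipschitz bound via the inclusion $\{|\varphi_-|<\tfrac{3\delta}{2}e^{-(\nu_{\max}+\varepsilon)t}\}\cap\Gamma_+\cap U_{3\delta/2}\subset e^{-tH_p}(\Gamma_+\cap U_{3\delta/2})$, letting $t\to\infty$ and forcing $\mu(U_{3\delta/2})=+\infty$; you instead start with a $\delta_0$-scale bump near $K$, flow it forward to a stopping time $t_*\sim\log(\delta/\delta_0)$, and let $\delta_0\to 0$. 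These are just two parametrizations (by $t$ vs.~by $\delta_0\approx\delta e^{-(\nu_{\max}+\varepsilon)t}$) of the same contradiction, and both rely on the lower Lipschitz estimate of Lemma~\ref{l:lab} as the decisive new ingredient relative to Theorem~\ref{t:gap1}. Your explicit verification that $\mu(U_\delta)>0$ before invoking Lemma~\ref{l:lab} is a small gain in rigor that the paper leaves implicit (the paper's argument should really be read as: either $\mu(U_{3\delta/2})=0$ and we are done, or else Lemma~\ref{l:lab} applies and yields a contradiction). One caveat: the claim that ``the flow on $\Gamma_+$ only changes the $\varphi_-$ coordinate'' is not literally true (the flow also moves along $K$), and keeping the propagated test functions supported in $U_{2\delta}$ should be phrased via the paper's backward inclusions or via a cutoff modification off $\Gamma_+\cap\{p=0\}$, where $\mu$ has no mass.
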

\begin{proof}
Arguing similarly to the proof of Lemma~\ref{l:noupper} (removing $B$ from~\eqref{e:fuzzy1} and~\eqref{e:fuzzy2}
and replacing $B$ by $B_1\Theta_m^+$ in~\eqref{e:fuzzy3}),
we see that it suffices to show
that if $h_j\to 0$ and $v=v(h_j)$ satisfies $\|v\|_{L^2}\leq 1$ and
\begin{gather}
  \label{e:roo-1}
(P-ihW_m^+-\lambda)v=o(h)\quad\text{microlocally in }U_{3\delta/2},\\
  \label{e:roo-2}
\Theta^+_mv=o(h)\quad\text{microlocally in }U_{3\delta/2},
\end{gather}
then $v=o(1)$ microlocally in $U_{3\delta/2}$.

Passing to a subsequence, we may assume that $v$ converges to some measure $\mu$ in
the sense of Definition~\ref{d:measures} and $h^{-1}\lambda\to \omega\in\mathbb C$, where
\begin{equation}
  \label{e:luo2}
\Im\omega\notin [-(m+1/2)(\nu_{\max}+\varepsilon)-\varepsilon,-(m+1/2)(\nu_{\min}-\varepsilon)+\varepsilon].
\end{equation}
We have $\mu(U_{3\delta/2}\setminus \Gamma_+)=0$
by Lemma~\ref{l:measure-elliptic} applied to~\eqref{e:roo-2}.
By Lemma~\ref{l:typical} applied to~\eqref{e:roo-1}, we see that for each $a\in C_0^\infty(U_{3\delta/2})$,
$$
\int_{\Gamma_+\cap U_{3\delta/2}} H_pa\,d\mu=2\int_{\Gamma_+\cap U_{3\delta/2}} (\Im\omega+\sigma(W_m^+))a\,d\mu;
$$
for $t\geq 0$, since $e^{-tH_p}(\Gamma_+\cap U_{3\delta/2})\subset\Gamma_+\cap U_{3\delta/2}$,
we have by~\eqref{e:luo2} either (when $\Im\omega$ is too big)
\begin{equation}
  \label{e:sar1}
\mu(e^{-tH_p}(\Gamma_+\cap U_{3\delta/2}))\geq e^{-(\nu_{\min}-3\varepsilon)t}\mu(U_{3\delta/2}) 
\end{equation}
or (when $\Im\omega$ is too small)
\begin{equation}
  \label{e:sar2}
\mu(e^{-tH_p}(\Gamma_+\cap U_{3\delta/2}))\leq e^{-(\nu_{\max}+3\varepsilon)t}\mu(U_{3\delta/2}).
\end{equation}
Here we used that $\sigma(W_m^+)=mc_+\in [m(\nu_{\min}-\varepsilon),m(\nu_{\max}+\varepsilon)]$ on $U_{3\delta/2}$.
Using Lemma~\ref{l:lab} with~\eqref{e:roo-2} and observing that for $t\geq 0$, by Lemma~\ref{l:phi+}
$$
\begin{gathered}
\{|\varphi_-|< {\textstyle{3\delta\over 2}}e^{-(\nu_{\max}+\varepsilon)t}\}\cap \Gamma_+\cap U_{3\delta/2}
\quad\subset\quad e^{-tH_p}(\Gamma_+\cap U_{3\delta/2}),\\
e^{-tH_p}(\Gamma_+\cap U_{3\delta/2})\quad\subset\quad \{|\varphi_-|< {\textstyle {3\delta\over 2}}e^{-(\nu_{\min}-\varepsilon)t}\}\cap \Gamma_+\cap U_{3\delta/2}
\end{gathered}
$$
both~\eqref{e:sar1} and~\eqref{e:sar2} imply, by taking the limit $t\to +\infty$, that $\mu(U_{3\delta/2})=0$ and
thus $v=o(1)$ microlocally on $U_{3\delta/2}$, finishing the proof.
\end{proof}
We can now prove Theorem~\ref{t:gap2}. We argue by contradiction. If~\eqref{e:gap2}
does not hold, then (changing $\varepsilon$) there exist sequences $h_j\to 0$
and $u=u(h_j)\in L^2(X)$ such that
$$
\|u\|_{L^2}=1,\quad
\|(P-iQ-\lambda)u\|_{L^2}=o(h^{m+2}),
$$
where $\lambda=\lambda(h_j)=\mathcal O(h_j)$ satisfies
\begin{equation}
  \label{e:im-cond}
h^{-1}\Im\lambda \in \big(-(m+1/2)(\nu_{\min}-\varepsilon)+\varepsilon,-(m-1/2)(\nu_{\max}+\varepsilon)-\varepsilon\big).
\end{equation}
Define $u_0,\dots, u_{m+1}$
by~\eqref{e:u-m}; then by Lemma~\ref{l:refinery},
$u_{m+1}=o(h^{m+1})$ microlocally in $U_{\delta}$. Here we used the following corollary of~\eqref{e:im-cond}:
$$
h^{-1}\Im\lambda>-(m+1)(\nu_{\min}-\varepsilon)+\varepsilon.
$$
Using Lemma~\ref{l:lab2} for
$v=u_m,u_{m-1},\dots, u_0$ (and a decreasing sequence of $\delta$'s),
we get $u_j=o(h^{j})$ microlocally in $U_{3^{j-m-1}\delta}$
for $j=0,1,\dots, m+1$. Here we used the following corollary of~\eqref{e:im-cond}:
$$
h^{-1}\Im\lambda\notin [-(j+1/2)(\nu_{\max}+\varepsilon)-\varepsilon,
-(j+1/2)(\nu_{\min}-\varepsilon)+\varepsilon],\quad 0\leq j\leq m.
$$
In particular, we see that $u=u_0=o(1)$ microlocally in the neighborhood
$U_{3^{-m-1}\delta}$ of $K\cap \{p=0\}$, giving a contradiction
with Lemma~\ref{l:concentrate} and finishing the proof of Theorem~\ref{t:gap2}.

\section{A one-dimensional example}
\label{s:lower}

In this section, we show a lower bound on the cutoff resolvent for $\lambda=0$ and a specific operator
with normally hyperbolic trapping. We start with the model operator (which we later
put into the framework discussed in the introduction)
$$
P_0=xhD_x+{h\over 2i}:C^\infty(\mathbb R)\to C^\infty(\mathbb R),\quad
D:={1\over i}\partial.
$$
Note that $P_0^*=P_0$.

We construct an approximate solution $u_0$ for the
equation $P_0u=0$ by truncating the exact solutions $(x_\pm)^{-1/2}$ in the frequency space.
More precisely, take a function $\chi\in \mathscr S(\mathbb R)$ such that $\hat\chi\in C_0^\infty\big((1/2,1)\big)$ and
define
$$
\psi(x)=\sgn x\int_0^x |x|^{-1/2}|y|^{-1/2}\chi(y)\,dy.
$$
Here $\hat\chi$ denotes the Fourier transform of $\chi$:
$$
\hat\chi(\xi)=\int_{\mathbb R} e^{-ix\xi}\chi(x)\,dx.
$$
Then (as can be seen by considering the Taylor expansion of $\chi$ at 0)
we have
$$
\psi\in C^\infty(\mathbb R),\quad
P_0\psi=-ih\chi.
$$
Moreover, we have
\begin{equation}
  \label{e:asympsi}
\psi(x)=\psi_\pm |x|^{-1/2}+\mathcal O(|x|^{-\infty})\quad\text{as }x\to \pm\infty,
\end{equation}
where the constants $\psi_\pm$ are given by
$$
\psi_\pm=\pm \int_0^{\pm\infty} |y|^{-1/2}\chi(y)\,dy.
$$
We choose $\chi$ so that $\psi_\pm\neq 0$; this is possible since
$$
\psi_\pm={e^{\pm {i\pi\over 4}}\over 2\sqrt{\pi}}\int_0^\infty \xi^{-1/2}\hat\chi(\xi)\,d\xi.
$$
Define
$$
u_0(x;h):=h^{-1/2}\psi(x/h),\quad
f_0(x;h):=-ih^{1/2}\chi(x/h),
$$
then $P_0u_0=f_0$. We have
\begin{equation}
  \label{e:wf-u0}
\WFh(u_0)\subset \{\xi=0\}\cup \{x=0,\ \xi\in [0,1)\}.
\end{equation}
Indeed, if $a\in C_0^\infty\big((0,\infty)\big)$, then by~\eqref{e:asympsi},
$$
a(x)u_0(x)=\psi_+ a(x)x^{-1/2}+\mathcal O(h^\infty);
$$
since $\psi_+ a(x)x^{-1/2}$ is smooth and independent of $h$, we see by~\cite[\S8.4.2]{e-z}
that $\WFh(au_0)\subset \{\xi=0\}$. Same is true when $a\in C_0^\infty\big((-\infty,0)\big)$. Moreover, since $(x\partial_x+{1\over 2})\psi=\chi$,
we have $(\xi\partial_\xi+{1\over 2})\hat\psi=-\hat\chi$; since $\partial_x\psi\in L^2$, we have
$\xi\hat\psi(\xi)\in L^2$ and the fact that
$\supp\hat\chi\subset (1/2,1)$ implies $\supp\hat\psi\subset [0,1)$.
This in turn implies that $\WFh(u_0)\subset \{\xi\in [0,1)\}$, finishing the proof of~\eqref{e:wf-u0}.
Since $\supp\hat\chi\subset (1/2,1)$ and $\chi$ is Schwartz, we also have
\begin{equation}
  \label{e:wf-f0}
\WFh(f_0)\subset \{x=0,\ \xi\in (1/2,1)\}.
\end{equation}
We furthermore calculate using~\eqref{e:asympsi}
\begin{equation}
\label{e:norm-bounds}
\|u_0\|_{L^2(-1,1)}=\sqrt{|\psi_+|^2+|\psi_-|^2}\sqrt{\log(1/h)}+\mathcal O(1),\quad
\|f_0\|_{L^2(\mathbb R)}=\mathcal O(h). 
\end{equation}

We now put $u_0,f_0,P_0$ into the framework discussed in the introduction. Put
$$
X=\mathbb S^1=\mathbb R/(6\mathbb Z);
$$
we view $X$ as the interval $[-3,3]$ with the endpoints glued together.
We consider an operator
$$
P\in\Psi^{\comp}_h(X),\quad
P=P^*;\quad
P=P_0\quad\text{microlocally on }\{|x|\leq 2,\ |\xi|\leq 2\}.
$$
Note that by~\eqref{e:wf-u0}, we have $Pu_0=f_0+\mathcal O(h^\infty)_{C^\infty}$
on $(-2,2)$.

We next consider $q_0\in C^\infty(X)$ such that $q_0=0$ on $[-1,1]$,
$q_0\geq 0$ everywhere, and $q_0=1$ outside of $[-3/2,3/2]$.
Take $\chi_1\in C_0^\infty(-2,2)$ such that $\chi_1=1$ on $[-3/2,3/2]$
and define
$$
u_1=\chi_1 u_0,\quad
f_1=(P-iq_0)u_1-\chi_1 f_0=[P,\chi_1]u_0-iq_0\chi_1u_0+\mathcal O(h^\infty)_{C^\infty}.
$$
In particular, $f_1=\mathcal O(h^\infty)$ on $[-1,1]$ and $\WFh(f_1)\subset \{\xi=0\}$. Using ODE theory
for the operator $P_0-iq_0={h\over i}(x\partial_x+{1\over 2}+h^{-1}q_0)$, we construct
$u_2\in C^\infty(X)$ such that $u_2=0$ on $[-1,1]$,
$u_2=\mathcal O(e^{-\varepsilon/h})$ outside of $[-7/4, 7/4]$,
$\WFh(u_2)\subset \{\xi=0\}$, and
$$
(P-iq_0)u_2=-f_1+\mathcal O(h^\infty)_{C^\infty}.
$$
Finally, take a self-adjoint operator $Q_1\in\Psi^0_h$ such that $\WFh(Q_1)\subset \{|\xi|\geq 3/2\}$,
$\sigma(Q_1)\geq 0$ everywhere, and $\sigma(Q_1)>0$ for $|\xi|\geq 2$, denote
by $Q_0$ the multiplication operator by $q_0$, and put
$$
Q:=Q_0+Q_1,\quad
\sigma(Q)=q_0+\sigma(Q_1).
$$
Then the operator $P-iQ:L^2(X)\to L^2(X)$ satisfies the assumptions~\eqref{a:1}--\eqref{a:5}
of the introduction, with $\nu_{\min}=\nu_{\max}=1$ and
$$
\Gamma_+=\{\xi=0,\ |x|<2\},\quad
\Gamma_-=\{x=0,\ |\xi|<2\},\quad
K=\{x=\xi=0\}.
$$
We have
$$
(P-iQ)u=\chi_1 f_0+\mathcal O(h^\infty),\quad
u:=u_1+u_2.
$$
On the other hand, \eqref{e:norm-bounds} implies
$$
\|u\|_{L^2}\geq C^{-1}\sqrt{\log(1/h)},\quad
\|\chi_1f_0\|_{L^2}=\mathcal O(h).
$$
This means that for each $A\in\Psi^{\comp}_h(X)$ such that
$$
A=1+\mathcal O(h^\infty)\quad\text{microlocally on }V:=\{x=0,\ \xi\in (1/2,1)\},
$$
we have
$$
\|R(0)A\|_{L^2\to L^2}\geq C^{-1}h^{-1}\sqrt{\log(1/h)}.
$$
Note that $V\subset\Gamma_-$ and $V\cap\Gamma_+=\emptyset$, therefore we can choose
$A$ to be microlocalized away from the trapped set $K$.

\medskip\noindent\textbf{Acknowledgements.} I would like to thank Andr\'as Vasy for many conversations on
resolvent bounds which motivated this paper and provided several important insights,
and Maciej Zworski for invaluable advice throughout this project.
I am also grateful to Richard Melrose, St\'ephane Nonnenmacher,
and Kiril Datchev for very useful discussions, and to an anonymous referee
for suggestions to improve the manuscript.
This research was conducted during the period the author served as
a Clay Research Fellow; part of the work was completed while the author was
in residence at MSRI (NSF grant 0932078 000,
Fall 2013).

\def\arXiv#1{\href{http://arxiv.org/abs/#1}{arXiv:#1}}

\end{document}